\newtheorem{theorem}{Theorem}
\newtheorem{lemma}[theorem]{Lemma}
\newtheorem{corollary}[theorem]{Corollary}
\theoremstyle{definition}
\newtheorem{remark}[theorem]{Remark}
\newtheorem{definition}[theorem]{Definition}
\let\phi=\varphi
\def\card{\operatorname{card}}
\def\Ann{\operatorname{Ann}}
\def\Ass{\operatorname{Ass}}
\def\Min{\operatorname{Min}}
\let\oldbigwedge\bigwedge
\def\BIGwedge{{\textstyle\oldbigwedge}}
\def\medwedge{{\scriptstyle\oldbigwedge}}
\def\bigwedge{\mathchoice{\BIGwedge}{\BIGwedge}{\medwedge}{}}
\DeclareMathOperator{\Nil}{Nil}
\DeclareMathOperator{\rad}{rad}
\DeclareMathOperator{\FId}{FId}
\DeclareMathOperator{\Max}{Max}
\DeclareMathOperator{\zd}{zd}
\DeclareMathOperator{\diam}{diam}
\let\epsilon=\varepsilon
\begin{document}
\title{Zero-Divisors of Content Algebras}

\author{Peyman Nasehpour}
\address{Universit\"at Osnabr\"uck, FB Mathematik/Informatik, 49069
Osnabr\"uck, Germany} \email{Peyman.Nasehpour@mathematik.uni-osnabrueck.de} \email{Nasehpour@gmail.com}

\begin{abstract}
In this article, we prove that in content extentions minimal primes extend to minimal primes and discuss zero-divisors of a content algebra over a ring who has Property (A) or whose set of zero-divisors is a finite union of prime ideals. We also examine the preservation of diameter of zero-divisor graph under content extensions.
\end{abstract}

\maketitle

\section{Introduction}

Throughout this paper all rings are commutative with unit and all modules are assumed to be unitary\footnote{2000 Mathematics Subject Classification: 13A15, 13B25, 05C99.
Keywords: Content Algebra, Few Zero-divisors, McCoy's Property, Minimal Prime, Property (A), Primal Ring, Zero-divisor Graph}. In this paper, we discuss zero-divisors of content algebras. To this end, one needs to know about content modules and algebras introduced in [OR]. Our main goal is to show that many results of the zero-divisors of polynomial rings are correct for content algebras.

First we recall the essential definitions. Let $R$ be a commutative ring with identity, and $M$ a unitary $R$-module. The \textit{content function}, $c$ from $M$ to the ideals of $R$ is defined by
$$
c(x) = \bigcap \lbrace I \colon I \text{~is an ideal of~} R \text{~and~} x \in IM \rbrace.
$$

$M$ is called a \textit{content $R$-module} if $x \in c(x)M $, for all $x \in M$.

Note that $c(x)$ is a finitely generated ideal of $R$ for all $x\in M$, if $M$ is a content $R$-module [OR, 1.2]. So when $M$ is a content $R$-module, the function $c$ is from $M$ to $\FId(R)$, where by $\FId(R)$, we mean the set of finitely generated ideals of $R$.

Let $R^\prime$ be an $R$-algebra. $R^\prime$ is defined to be a \textit{content $R$-algebra}, if the following conditions hold:

\begin{enumerate}
 \item
$R^\prime$ is a content $R$-module.
 \item
(\textit{Faithful flatness}) For any $r \in R$ and $f \in R^\prime$, the equation $c(rf) = rc(f)$ holds, and $c(R^\prime) = R$.
 \item
(\textit{Dedekind-Mertens content formula}) For all $f$ and $g$ in $R^\prime$, there exists a natural number $n$ such that $c(f)^n c(g) = c(f)^{n-1} c(fg)$.
\end{enumerate}

In section 2, we discuss content and weak content algebras and prove that if $R$ is a ring and $S$, a commutative monoid, then the monoid ring $B=R[S]$ is a content $R$-algebra if and only if one of the following conditions satisfies:

\begin{enumerate}
\item For $f,g \in B$, if $c(f) = c(g) = R$, then $c(fg) = R$.
\item (\textit{McCoy's Property}) For $g \in B$, $g$ is a zero-divisor of $B$ iff there exists $r \in R-\lbrace 0 \rbrace$ such that $rg = 0$.
\item $S$ is a cancellative and torsion-free monoid.
\end{enumerate}

In Section 3, we discuss prime ideals of content and weak content algebras and we show that in content extensions, minimal primes extend to minimal primes. More precisely, if $B$ is a content $R$-algebra, then there is a correspondence between $\Min(R)$ and $\Min(B)$, with the function $ \phi : \Min(R) \longrightarrow \Min(B)$ defined by $\textbf{p} \longrightarrow \textbf{p}B$.

In Section 4, we introduce a family of rings which have very few zero-divisors. It is a well-known result that the set of zero-divisors of a Noetherian ring is a finite union of its associated primes [K, p. 55]. Rings having few zero-divisors have been introduced in [Dav]. We define that a ring $R$ has \textit{very few zero-divisors}, if $Z(R)$ is a finite union of prime ideals in $\Ass(R)$. In this section, we prove that if $R$ is a ring that has very few zero-divisors and $B$ is a content $R$-algebra, then $B$ has very few zero-divisors too.

Another celebrated property of Noetherian rings is that every ideal entirely contained in the set of their zero-divisors has a nonzero annihilator. A ring $R$ has \textit{Property (A)}, if each finitely generated ideal $I \subseteq Z(R)$ has a nonzero annihilator [HK]. In Section 4, we also prove some results for content algebras over rings having Property (A) and then we discuss rings having few zero-divisors in more details. Let us recall that a ring $R$ is said to have few zero-divisors, if the set $Z(R)$ of zero-divisors is a finite union of prime ideals. It is well-known that a ring $R$ has few zero-divisors iff its classical quotient ring $T(R)$ is semi-local [Dav]. We may suppose that $Z(R) = \bigcup _{i=1}^n \textbf{p}_i$ such that $\textbf{p}_i \nsubseteq \bigcup\cup _{j=1 \wedge j \neq i}^n \textbf{p}_j$ for all $ 1\leq i \leq n$. Then we have $\textbf{p}_i \nsubseteq \textbf{p}_j$ for all $i \neq j$ and by Prime Avoidance Theorem, these prime ideals are uniquely determined. In such a case, it is easy to see that $\Max(T(R))= \{\textbf{p}_1T(R), \dots , \textbf{p}_nT(R) \}$, where by $T(R)$ we mean total quotient ring of $R$. Such prime ideals are called maximal primes in $Z(R)$. We denote the number of maximal primes in $Z(R)$ by $\zd(R)$. As one of the main results of this section, we show that if $R$ has Property (A) and $\zd(R)=n$ and $B$ is a content $R$-algebra, then $\zd(B)=n$. At the end of this section, we consider the interesting case, when $\zd(R)=1$, i.e. $Z(R)$ is an ideal of $R$. Such a ring is called a primal ring [Dau].

We let $Z(R)^*$ denote the (nonempty) set of proper zero-divisors of $R$, where by a proper zero-divisor we mean a zero-divisor different from zero. We consider the graph $\Gamma(R)$, called the zero-divisor graph of $R$, whose vertices are the elements of $Z(R)^*$ and edges are those pairs of distinct proper zero-divisors $\{a,b\}$ such that $ab=0$. The last section is devoted to examine the preservation of diameter of zero-divisor graph under content extensions.

Unless otherwise stated, our notation and terminology will follow as closely as possible that of Gilmer [G1]. Note that \textit{iff} always stands for if and only if.

\section{Content Algebras}

Content modules and algebras were introduced in [OR]. Content algebras are actually a natural generalization of polynomial rings [ES]. Let $R$ be a commutative ring with identity. For $f \in R[X]$, the content of $f$, denoted by $c(f)$, is defined as the $R$-ideal generated by the coefficients of $f$. One can easily check that $c(fg) \subseteq c(f)c(g)$ for the two polynomials $f, g \in R[X]$ and may ask when the equation $c(fg) = c(f)c(g)$ holds. Tsang, a student of Kaplansky, proved that if $D$ is an integral domain and $c(f)$, for $f \in D[X]$, is an invertible ideal of $D$, then $c(fg) = c(f)c(g)$, for all $g \in D[X]$. Tsang's guess in [T] was that the converse was true and the correctness of her guess was completely proved some decades later [LR]. Though the equation $c(fg) = c(f)c(g)$ is not always true, a weaker formula always holds that is called the \textit{Dedekind-Mertens content formula} [AG].

\begin{theorem}
 \textbf{Dedekind-Mertens Lemma}\textbf{.} Let $R$ be a ring. For each $f$ and $g$ in $R[X]$, there exists a natural number $n$ such that $c(f)^n c(g) = c(f)^{n-1} c(fg)$.
\end{theorem}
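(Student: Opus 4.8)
The plan is to set $n=\deg g+1$ and prove the sharper statement $c(f)^{m+1}c(g)=c(f)^{m}c(fg)$ with $m=\deg g$. One inclusion is essentially Gauss's observation: writing $f=\sum_i a_iX^i$ and $g=\sum_j b_jX^j$, each coefficient $d_k=\sum_{i}a_ib_{k-i}$ of $fg$ lies in $c(f)c(g)$, so $c(fg)\subseteq c(f)c(g)$ and hence $c(f)^{m}c(fg)\subseteq c(f)^{m+1}c(g)$. Thus the whole content of the lemma is the reverse inclusion $c(f)^{m+1}c(g)\subseteq c(f)^{m}c(fg)$, and this is where I would concentrate.

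For the reverse inclusion I would use a Cramer's-rule / determinant argument. The relations $d_k=\sum_i a_ib_{k-i}$ express the coefficients of $fg$ as the image of the vector $(b_0,\dots,b_m)$ under the Toeplitz (convolution) matrix built from the $a_i$. Selecting any $(m+1)\times(m+1)$ submatrix of this matrix and applying Cramer's rule, I would obtain, for each coefficient $b_j$ and each such submatrix, a relation $\Delta\, b_j\in c(f)^{m}c(fg)$, where $\Delta$ is the corresponding minor: $\Delta$ is a sum of products of $m+1$ of the $a_i$, hence lies in $c(f)^{m+1}$, while the cofactors are products of $m$ of the $a_i$, hence lie in $c(f)^{m}$. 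Choosing the band so that the diagonal is constant already yields $a_i^{\,m+1}b_j\in c(f)^{m}c(fg)$ for every $i,j$.

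The main obstacle is then purely ideal-theoretic: to conclude $c(f)^{m+1}c(g)\subseteq c(f)^{m}c(fg)$ I must show that the minors $\Delta$ arising this way generate all of $c(f)^{m+1}$. The pure powers $a_i^{m+1}$ alone do not suffice, since for $I=(a_0,\dots,a_p)$ the power $I^{m+1}$ is not generated by the $a_i^{m+1}$ (already $(x,y)^2$ is not generated by $x^2,y^2$); the off-diagonal Toeplitz minors are exactly what is needed to recover the mixed products $a_{i_0}\cdots a_{i_m}$, and verifying that they do so is the combinatorial heart of the proof.

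As an alternative route I would localize at each prime of $R$ --- the content function commutes with localization, and the desired equality of finitely generated ideals may be checked locally --- and then induct on $\deg g$. The base case $\deg g=0$ is immediate from the identity $c(b_0f)=b_0c(f)$, and the inductive step would pass from $g$ to $g-b_0$, a shift of a polynomial of smaller degree; the residual term $b_0c(f)^{m+1}$ that this produces must be absorbed into $c(f)^{m}c(fg)$, which is the same obstacle as above in a different guise.
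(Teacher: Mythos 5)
There is a genuine gap, and you in fact flag it yourself: the reverse inclusion $c(f)^{m+1}c(g)\subseteq c(f)^{m}c(fg)$, which as you correctly say is the entire content of the lemma, is never established. Your reduction to the sharp exponent $n=\deg g+1$ and the easy inclusion $c(f)^{m}c(fg)\subseteq c(f)^{m+1}c(g)$ are fine, but both of your routes for the hard inclusion terminate at an acknowledged, unresolved obstacle: in the determinantal route you concede that one must still show the Toeplitz minors generate enough of $c(f)^{m+1}$ to capture the mixed products $a_{i_0}\cdots a_{i_m}$ (``the combinatorial heart of the proof''), and in the localization-plus-induction route you concede the residual term is ``the same obstacle in a different guise.'' A plan that twice identifies the critical step and twice defers it is a sketch, not a proof. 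Moreover, the one concrete intermediate claim you do make is incorrect: for a middle coefficient $a_i$ with $0<i<\deg f$, the $(m+1)\times(m+1)$ submatrix built from consecutive rows with constant diagonal $a_i$ is \emph{not} triangular, and its determinant is $a_i^{m+1}$ plus signed mixed products of the same total degree; for instance, when $m=1$ the relevant determinant is $a_i^2-a_{i-1}a_{i+1}$, so Cramer's rule yields $(a_i^2-a_{i-1}a_{i+1})b_j\in c(f)\,c(fg)$ rather than $a_i^2b_j\in c(f)\,c(fg)$. Only the extreme coefficients $a_0$ and $a_{\deg f}$ give triangular bands and hence honest pure powers, and, as you note, pure powers would not suffice anyway.

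For the comparison you were asked about: the paper offers no proof of this statement at all --- it quotes the Dedekind--Mertens Lemma as a classical result with a citation to Arnold and Gilmer [AG] --- so your attempt has to stand on its own, and it does not. The proofs in the literature (Arnold--Gilmer [AG]; Northcott [N]; Heinzer--Huneke [HH], who localize and then run an induction tied to the number of generators of $c(g)$; Bruns--Guerrieri [BG] via determinantal rings) each devote their real work precisely to the step you leave open, and none of them obtains the mixed products by a formal Cramer's-rule manipulation; completing your outline would require actually carrying out one of these inductions rather than pointing to where it must go.
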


Good examples of content $R$-algebras are the polynomial ring $R[X]$ and the group ring $R[G]$, where $G$ is a torsion-free abelian group [N]. These are actually free $R$-modules. For some examples of content $R$-algebras that as $R$-modules are not free, one can refer to [OR, Examples 6.3, p. 64]. Rush generalized content algebras and defined weak content algebras as follows [R, p. 330]:

\begin{definition}
 Let $R$ be a commutative ring with identity and $R^\prime$ an $R$-algebra. $R^\prime$ is defined to be a \textit{weak content $R$-algebra}, if the following conditions hold:

\begin{enumerate}
 \item
$R^\prime$ is a content $R$-module.
 \item
(\textit{Weak content formula}) For all $f$ and $g$ in $R^\prime$, $c(f)c(g) \subseteq \rad (c(fg))$ (Here $\rad(A)$ denotes the radical of the ideal $A$).
\end{enumerate}

\end{definition}

It is obvious that content algebras are weak content algebras, but the converse is not true. For example if $R$ is a Noetherian ring, then $R[[X_1,X_2,\ldots,X_n]]$ is a weak content $R$-algebra, while it is not a content $R$-algebra [R, p. 331]. We end our introductory section with the following result:

\begin{theorem}
 Let $R$ be a ring and $S$ be a commutative monoid. Then the following statements about the monoid algebra $B = R[S]$ are equivalent:

\begin{enumerate}
 \item $B$ is a content $R$-algebra.
 \item $B$ is a weak content $R$-algebra.
 \item For $f,g \in B$, if $c(f) = c(g) = R$, then $c(fg) = R$.
 \item (\textit{McCoy's Property}) For $g \in B$, $g$ is a zero-divisor of $B$ iff there exists $r \in R-\lbrace 0 \rbrace$ such that $rg = 0$.
 \item $S$ is a cancellative and torsion-free monoid.
 \end{enumerate}

\end{theorem}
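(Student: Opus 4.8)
The plan is to treat the Dedekind--Mertens content formula as the only substantive ingredient. For any monoid algebra $B=R[S]$ one has $B=\bigoplus_{s\in S}RX^{s}$ as an $R$-module, so $B$ is automatically a content $R$-module whose content function is the coefficient ideal, $c\bigl(\sum_s a_sX^s\bigr)=(a_s:s\in S)$, and the faithful-flatness conditions $c(rf)=rc(f)$ and $c(B)=R$ hold trivially. Thus (1) reduces to the validity of $c(f)^{n}c(g)=c(f)^{n-1}c(fg)$ and (2) to the weak formula $c(f)c(g)\subseteq\rad(c(fg))$. I would then close the equivalences along the cycle $(1)\Rightarrow(2)\Rightarrow(3)\Rightarrow(5)\Rightarrow(1)$, folding in (4) by proving $(1)\Rightarrow(4)\Rightarrow(5)$.

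The implications $(1)\Rightarrow(2)\Rightarrow(3)$ are routine: the first is the fact recalled earlier that content algebras are weak content algebras, and for the second, if $c(f)=c(g)=R$ then $R=c(f)c(g)\subseteq\rad(c(fg))$ forces $1\in c(fg)$, i.e.\ $c(fg)=R$. For $(1)\Rightarrow(4)$ I would run McCoy's argument from Dedekind--Mertens: if $g$ is a zero-divisor, choose $f\neq0$ with $fg=0$; then $c(g)^{n}c(f)=c(g)^{n-1}c(gf)=0$ (as $gf=0$), and taking $k$ minimal with $c(g)^{k}c(f)=0$ (so $k\ge1$, since $c(f)\neq0$) and picking $0\neq r\in c(g)^{k-1}c(f)$ gives $r\,c(g)=0$, hence $rg=0$. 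The converse half of (4) is immediate.

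The heart of the characterisation is the contrapositive of both $(3)\Rightarrow(5)$ and $(4)\Rightarrow(5)$, and a single construction handles them together. If $S$ is not cancellative, pick $s\neq t$ and $u$ with $s+u=t+u$ and set $f=X^{s}-X^{t}$, $g=X^{u}$; if $S$ is cancellative but not torsion-free, pick $a\neq b$ and a minimal $n\ge2$ with $na=nb$ and set $f=X^{a}-X^{b}$, $g=\sum_{i=0}^{n-1}X^{(n-1-i)a+ib}$, where minimality of $n$ guarantees the exponents are distinct so that $g\neq0$. In either case $fg=0$ while $c(f)=c(g)=R$, violating (3); and since $a\neq b$ forces $rf\neq0$ for every $0\neq r\in R$, the nonzero $f$ is a zero-divisor annihilated by no nonzero constant, violating (4).

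The main obstacle is $(5)\Rightarrow(1)$, i.e.\ establishing Dedekind--Mertens for $R[S]$ over an arbitrary $R$ once $S$ is cancellative and torsion-free. Here I would embed $S$ into its group of quotients $G$, which is torsion-free abelian, and invoke Levi's theorem to equip $G$ with a compatible total order; working inside $R[G]\supseteq R[S]$ this assigns every element a well-defined leading exponent, and the classical proof of the Dedekind--Mertens Lemma, which uses only a monomial order on exponents, then goes through and yields the formula for the coefficient ideals, which agree whether computed in $R[S]$ or in $R[G]$. Alternatively one may write $G$ as a directed union of finitely generated, hence free, subgroups, reduce to a Laurent polynomial ring $R[x_1^{\pm1},\dots,x_k^{\pm1}]$ containing the finitely many exponents of $f$ and $g$, and deduce the formula from the polynomial case after observing that multiplication by a unit monomial leaves the content unchanged. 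I expect the bookkeeping in the ordered-group leading-term induction to be the one place that demands genuine care.
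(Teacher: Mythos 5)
Your proposal is correct and follows essentially the same route as the paper: the identical cycle of implications, with the same monomial-difference constructions $f=X^{s}-X^{t}$, $g=X^{u}$ (non-cancellative case) and $f=X^{a}-X^{b}$, $g=\sum_{i=0}^{n-1}X^{(n-1-i)a+ib}$ with $n$ minimal (non-torsion-free case) proving $(3)\Rightarrow(5)$ and $(4)\Rightarrow(5)$ by contraposition. The only difference is self-containedness: where the paper simply cites [OR] and [R] for $(1)\Rightarrow(2)\Rightarrow(3)$ and $(1)\Rightarrow(4)$, and cites Northcott [N] for the hard implication $(5)\Rightarrow(1)$, you sketch these arguments inline (the Dedekind--Mertens/McCoy annihilator argument, and the embedding of $S$ into its totally ordered quotient group resp.\ the Laurent-polynomial reduction, the former being exactly the content of [N]).
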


\begin{proof}
 $(1) \rightarrow (2) \rightarrow (3)$ and $(1) \rightarrow (4)$ are obvious ([OR] and [R]). Also, according to [N] (5) implies (1). Therefore the proof will be complete if we prove that (3) as well as (4) implies (5).

$(3) \rightarrow (5)$: We prove that if $S$ is not cancellative or not torsion-free then (3) cannot hold. For the moment, suppose that $S$ is not cancellative, so there exist $s,t,u \in S$ such that $s+t = s+u$ while $t \not= u$. Put $f = X^s$ and $g = (X^{t}-X^u)$. Then obviously $c(f) = c(g) = R$, while $c(fg) = (0)$. Finally suppose that $S$ is cancellative but not torsion-free. Let $s,t \in S$ be such that $s \not=t$, while $ns = nt$ for some natural $n$. Choose the natural number $k$ minimal so that $ks = kt$. Then we have $0 = X^{ks}-X^{kt} = (X^s-X^t)(\sum_{i=0}^{k-1} X^{(k-i-1)s+it}) $.

Since $S$ is cancellative, the choice of $k$ implies that $ (k-i_1-1)s+i_{1}t \not= (k-i_2-1)s+i_{2}t $ for $0 \leq i_1 < i_2 \leq k-1 $.
Therefore $\sum_{i=0}^{k-1} X^{(k-i-1)s+it} \not= 0$, and this completes the proof. In a similar way one can prove $(4) \rightarrow (5)$ [G2. p.82].
\end{proof}

\section{Prime Ideals in Content Algebras}

Let $B$ be a weak content $R$-algebra such that for all $\textbf{m} \in \Max(R)$ (by $\Max(R)$, we mean the maximal ideals of $R$), we have $\textbf{m}B \not = B$, then by [R, Theorem 1.2, p. 330], prime ideals extend to prime ideals. Particularly in content algebras primes extend to primes. We recall that when $B$ is a content $R$-algebra, then $g$ is a zero-divisor of $B$, iff there exists an $r \in R- \lbrace 0 \rbrace $ such that $rg = 0$ [OR, 6.1, p. 63]. Now we give the following theorem about associated prime ideals. We assert that by $\Ass_R (M)$, we mean the associated prime ideals of the $R$-module $M$.

\begin{theorem}
 Let $B$ be a content $R$-algebra and $M$ a nonzero $R$-module. If $\textbf{p} \in \Ass_R (M)$ then $\textbf{p}B \in \Ass_B (M \otimes_R B)$.
\end{theorem}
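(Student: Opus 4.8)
The plan is to exhibit an explicit element of $M \otimes_R B$ whose $B$-annihilator is exactly $\textbf{p}B$, thereby witnessing $\textbf{p}B \in \Ass_B(M \otimes_R B)$. Since $\textbf{p} \in \Ass_R(M)$, there is an $x \in M$ with $\textbf{p} = \Ann_R(x)$; equivalently, the $R$-linear map $R/\textbf{p} \to M$, $\bar r \mapsto rx$, is injective. My candidate witness on the $B$-side is $x \otimes 1 \in M \otimes_R B$, and the goal reduces to the identity $\Ann_B(x \otimes 1) = \textbf{p}B$. Before starting I would record two facts already available: because $B$ is a content $R$-algebra it is faithfully flat over $R$ by condition (2) of the definition, and because primes extend to primes in content algebras (the remark preceding the theorem, via [R, Theorem 1.2]) the ideal $\textbf{p}B$ is prime, hence proper.

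The easy inclusion is $\textbf{p}B \subseteq \Ann_B(x \otimes 1)$: a general element of $\textbf{p}B$ is a finite sum $\sum_i p_i b_i$ with $p_i \in \textbf{p}$ and $b_i \in B$, and $(\sum_i p_i b_i)(x \otimes 1) = \sum_i b_i (p_i x \otimes 1) = 0$ since each $p_i x = 0$. The reverse inclusion is the heart of the matter, and this is where flatness does the work. Applying $-\otimes_R B$ to the injection $R/\textbf{p} \hookrightarrow M$ and using flatness of $B$ yields an injection $(R/\textbf{p}) \otimes_R B \hookrightarrow M \otimes_R B$; composing with the canonical isomorphism $(R/\textbf{p}) \otimes_R B \cong B/\textbf{p}B$ produces an injective $B$-module homomorphism $\psi : B/\textbf{p}B \hookrightarrow M \otimes_R B$ sending $\bar b \mapsto x \otimes b = b(x \otimes 1)$. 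Consequently, for $b \in B$ we have $b(x \otimes 1) = 0$ iff $\psi(\bar b) = 0$ iff $\bar b = 0$ in $B/\textbf{p}B$ iff $b \in \textbf{p}B$, which gives $\Ann_B(x \otimes 1) = \textbf{p}B$. Since $\psi$ is injective and $\textbf{p}B$ is proper, $\bar 1 \neq 0$ forces $x \otimes 1 \neq 0$, so the witness is nonzero; combined with the primeness of $\textbf{p}B$ this yields $\textbf{p}B \in \Ass_B(M \otimes_R B)$.

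I expect the only genuine obstacle to be the reverse annihilator inclusion; the forward inclusion is a one-line computation and the primeness of $\textbf{p}B$ is cited. Flatness of $B$ over $R$ is precisely what converts the injection $R/\textbf{p} \hookrightarrow M$ into the injection $B/\textbf{p}B \hookrightarrow M \otimes_R B$ that pins down the annihilator, so the whole argument hinges on invoking condition (2) of the content-algebra definition at that point. Without flatness the tensored map could fail to be injective, and the identification of $\Ann_B(x \otimes 1)$ with $\textbf{p}B$ would no longer be forced.
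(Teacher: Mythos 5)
Your proof is correct and follows essentially the same route as the paper: tensor the injection $R/\textbf{p} \hookrightarrow M$ with the (faithfully) flat algebra $B$ to identify $\Ann_B(x \otimes 1)$ with $\textbf{p}B$, and invoke the fact that primes extend to primes in content algebras. You merely spell out the annihilator computation and the nonvanishing of $x \otimes 1$, which the paper leaves implicit.
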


\begin{proof}
 Let $\textbf{p} \in \Ass_R (M)$ and $\textbf{p}=\Ann(x)$, where $x\in M$. Therefore $ 0 \longrightarrow R/\textbf{p} \longrightarrow M$ is an $R$-exact sequence. Since $B$ is a faithfully flat $R$-module, we have the following $B$-exact sequence:
$$ 0 \longrightarrow B/\textbf{p}B \longrightarrow M \otimes_R B$$

with $\textbf{p}B = \Ann (x \otimes_R 1_B)$. Since $B$ is a content $R$-algebra, $\textbf{p}B$ is a prime ideal of $B$.
\end{proof}

Now we give a general theorem on minimal prime ideals in algebras. One of the results of this theorem is that in faithfully flat weak content algebras (including content algebras), minimal primes extend to minimal primes and, more precisely, there is actually a correspondence between the minimal primes of the ring and their extensions in the algebra.

\begin{theorem}
 Let $B$ be an $R$-algebra with the following properties:

\begin{enumerate}
 \item For each prime ideal $\textbf{p}$ of $R$, the extended ideal $\textbf{p}B$ of $B$ is prime.
 \item For each prime ideal  $\textbf{p}$ of $R$, $\textbf{p}B \cap R = \textbf{p}$.
\end{enumerate}

Then the function $ \phi : \Min(R) \longrightarrow \Min(B)$ given by $\textbf{p} \longrightarrow \textbf{p}B$ is a bijection.

\begin{proof}
First we prove that if $\textbf{p}$ is a minimal prime ideal of $R$, then $\textbf{p}B$ is also a minimal prime ideal of $B$. Let $Q$ be a prime ideal of $B$ such that $Q \subseteq \textbf{p}B$. So $Q \cap R \subseteq \textbf{p}B \cap R = \textbf{p}$. Since $\textbf{p}$ is a minimal prime ideal of $R$, we have $Q \cap R =\textbf{p}$ and therefore $ Q = \textbf{p}B $. This means that $ \phi $ is a well-defined function. Obviously the second condition causes $ \phi $ to be one-to-one. The next step is to prove that $ \phi $ is onto. For showing this, consider $Q \in \Min(B)$, so $Q \cap R$ is a prime ideal of $R$ such that $(Q \cap R)B \subseteq Q$ and therefore $(Q \cap R)B = Q$. Our claim is that $(Q \cap R)$ is a minimal prime ideal of $R$. Suppose $\textbf{p}$ is a prime ideal of $R$ such that $ \textbf{p} \subseteq Q \cap R$, then $\textbf{p}B \subseteq Q$ and since $Q$ is a minimal prime ideal of $B$, $\textbf{p}B = Q = (Q \cap R)B$ and therefore $\textbf{p} = Q \cap R$.
\end{proof}

\end{theorem}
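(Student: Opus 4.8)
The plan is to verify that $\phi$ is well defined, injective, and surjective, relying throughout on two moves: extending a prime $\textbf{p}$ to $\textbf{p}B$ (which is prime by hypothesis (1)) and contracting back to $R$ (which returns $\textbf{p}$ by hypothesis (2)). First I would check well-definedness, i.e.\ that $\textbf{p} \in \Min(R)$ forces $\textbf{p}B \in \Min(B)$. Since $\textbf{p}B$ is already prime, it suffices to rule out a strictly smaller prime: given any prime $Q \subseteq \textbf{p}B$, contracting to $R$ gives $Q \cap R \subseteq \textbf{p}B \cap R = \textbf{p}$, and minimality of $\textbf{p}$ in $R$ forces $Q \cap R = \textbf{p}$; re-extending yields $\textbf{p}B = (Q \cap R)B \subseteq Q \subseteq \textbf{p}B$, so $Q = \textbf{p}B$. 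Injectivity is then immediate from hypothesis (2): if $\textbf{p}_1 B = \textbf{p}_2 B$, contracting both sides to $R$ gives $\textbf{p}_1 = \textbf{p}_2$.

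For surjectivity I would take $Q \in \Min(B)$ and set $\textbf{q} = Q \cap R$, a prime of $R$. The inclusion $\textbf{q}B \subseteq Q$ together with primeness of $\textbf{q}B$ (hypothesis (1)) and minimality of $Q$ forces $\textbf{q}B = Q$, so $Q$ lies in the image of $\phi$ provided $\textbf{q} \in \Min(R)$. To establish this last point, suppose $\textbf{p}$ is a prime of $R$ with $\textbf{p} \subseteq \textbf{q}$; then $\textbf{p}B \subseteq \textbf{q}B = Q$ is prime, minimality of $Q$ in $B$ gives $\textbf{p}B = Q$, and contracting via hypothesis (2) yields $\textbf{p} = \textbf{q}$. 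Hence $\textbf{q}$ is minimal in $R$ and $\phi(\textbf{q}) = Q$.

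I expect the surjectivity step to be the main obstacle, because it is where the two hypotheses must interlock: hypothesis (1) guarantees that the auxiliary extended ideals $\textbf{q}B$ and $\textbf{p}B$ are prime, so that minimality of $Q$ in $B$ can be brought to bear on them, while hypothesis (2) is what lets me descend the resulting equalities of extended ideals back to equalities of primes in $R$. Neither computation is difficult, but the argument genuinely needs both properties simultaneously, and dropping either one would break the descent from $B$ to $R$.
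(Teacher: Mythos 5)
Your proposal is correct and follows essentially the same route as the paper's own proof: well-definedness via contraction $Q \cap R \subseteq \textbf{p}B \cap R = \textbf{p}$ and minimality, injectivity directly from hypothesis (2), and surjectivity by showing $(Q\cap R)B = Q$ and that $Q \cap R$ is minimal. Your version is in fact slightly more explicit than the paper's at one point, namely the re-extension step $\textbf{p}B = (Q\cap R)B \subseteq Q \subseteq \textbf{p}B$ needed to conclude $Q = \textbf{p}B$ in the well-definedness argument, which the paper leaves implicit.
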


\begin{corollary}
 Let $B$ be a weak content and faithfully flat $R$-algebra. Then the function $ \phi : \Min(R) \longrightarrow \Min(B)$ given by $\textbf{p} \longrightarrow \textbf{p}B$ is a bijection.
\end{corollary}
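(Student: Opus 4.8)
The plan is to deduce the corollary directly from the preceding theorem by verifying that a weak content, faithfully flat $R$-algebra $B$ satisfies its two hypotheses. Since that theorem already packages the entire minimal-prime correspondence, the only work is to check (1) that $\textbf{p}B$ is prime for every prime ideal $\textbf{p}$ of $R$, and (2) that $\textbf{p}B \cap R = \textbf{p}$ for every such $\textbf{p}$.

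For condition (1), I would first record that faithful flatness forces $\textbf{m}B \neq B$ for every maximal ideal $\textbf{m}$ of $R$. Indeed $B/\textbf{m}B \cong B \otimes_R (R/\textbf{m})$, and since $R/\textbf{m} \neq 0$ and $B$ is faithfully flat, this tensor product is nonzero, so $\textbf{m}B \neq B$. With this property established for all maximal ideals, the result of Rush [R, Theorem 1.2, p. 330] cited at the start of this section applies to the weak content algebra $B$, and it says precisely that prime ideals of $R$ extend to prime ideals of $B$. This yields condition (1).

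For condition (2), I would invoke the standard contraction identity for faithfully flat extensions. The key point is that if $B$ is faithfully flat over $R$, then the canonical map $N \to N \otimes_R B$, $n \mapsto n \otimes 1$, is injective for every $R$-module $N$: reducing to $N = Rm \cong R/\Ann(m)$ via flatness, one sees that $m \otimes 1 = 0$ would force $\Ann(m)B = B$, which is impossible for $m \neq 0$ because $\Ann(m)$ then lies in some maximal ideal $\textbf{m}$ and $\Ann(m)B \subseteq \textbf{m}B \neq B$. Applying this with $N = R/I$ gives that $R/I \to B/IB$ is injective, i.e.\ $IB \cap R = I$ for every ideal $I$; specializing to $I = \textbf{p}$ produces condition (2).

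Having verified both hypotheses, the theorem immediately delivers that $\phi : \Min(R) \to \Min(B)$, $\textbf{p} \mapsto \textbf{p}B$, is a bijection. I do not expect any genuine obstacle in this argument; the one point worth flagging is that faithful flatness is used twice and for different purposes — it supplies the condition $\textbf{m}B \neq B$ that activates Rush's theorem in (1), and independently it supplies the contraction identity needed for (2).
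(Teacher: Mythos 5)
Your proof is correct, and it follows the same overall skeleton as the paper: both reduce the corollary to the preceding theorem (Theorem 5) by checking its two hypotheses, and both obtain hypothesis (1) from Rush's theorem [R, Theorem 1.2] after noting that faithful flatness gives $\mathbf{m}B \neq B$ for every maximal ideal $\mathbf{m}$. Where you genuinely diverge is in hypothesis (2), the contraction identity $\mathbf{p}B \cap R = \mathbf{p}$. The paper stays inside the content-function machinery: by [OR, Corollary 1.6] one has $c(1_B) = R$ and $c(r) = c(r \cdot 1_B) = r\,c(1_B) = (r)$, and then $r \in IB$ forces $c(r) \subseteq I$ by the very definition of the content function, so $(r) \subseteq I$. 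You instead prove the standard fact that for a faithfully flat algebra the map $N \to N \otimes_R B$ is injective for every $R$-module $N$ (via $Rn \cong R/\Ann(n)$ and $\Ann(n)B \subseteq \mathbf{m}B \neq B$), and specialize to $N = R/I$ to get $IB \cap R = I$ for \emph{every} ideal $I$. Your route is more general and makes the logical division cleaner — the weak content structure is needed only for primality of extensions, while contraction is pure faithful flatness — at the cost of reproving a classical descent fact; the paper's route is shorter because the inclusion $r \in IB \Rightarrow c(r) \subseteq I$ comes for free from the definition of $c$, but it additionally leans on the homogeneity formula $c(rf) = rc(f)$ from [OR]. Both arguments are sound.
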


\begin{proof}
Since $B$ is a weak content and faithfully flat $R$-algebra, then for each prime ideal $\textbf{p}$ of $R$, the extended ideal $\textbf{p}B$ of $B$ is prime and also $c(1_B)=R$ by [OR, Corollary 1.6] and [R, Theorem 1.2]. Now consider $r \in R$, then $c(r) = c(r\cdot 1_B) = r\cdot c(1_B) = (r)$. Therefore if $r \in IB \cap R$, then $(r) = c(r) \subseteq I$.  Thus for each prime ideal  $\textbf{p}$ of $R$, $\textbf{p}B \cap R = \textbf{p}$.
\end{proof}

\begin{corollary}
 Let $R$ be a Noetherian ring. Then $ \phi : \Min(R) \longrightarrow \Min(R[[X_1,\ldots,X_n]])$ given by $\textbf{p} \longrightarrow \textbf{p}\cdot (R[[X_1,\ldots,X_n]])$ is a bijection.
\end{corollary}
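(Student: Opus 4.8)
The plan is to deduce this Corollary directly from the preceding one by verifying its two hypotheses for the algebra $B = R[[X_1,\dots,X_n]]$: namely that $B$ is a weak content $R$-algebra and that $B$ is faithfully flat over $R$. The weak content property is already in hand, since it was observed above (following [R, p.\ 331]) that over a Noetherian ring the power series ring $R[[X_1,\dots,X_n]]$ is a weak content $R$-algebra. Thus the whole task reduces to establishing faithful flatness, after which the previous Corollary delivers the bijection $\phi$ at once.

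For flatness I would argue as follows. Since $R$ is Noetherian, so is the polynomial ring $R[X_1,\dots,X_n]$, and $R[[X_1,\dots,X_n]]$ is precisely its completion with respect to the ideal $(X_1,\dots,X_n)$. The completion of a Noetherian ring along an ideal is flat over that ring, so $R[[X_1,\dots,X_n]]$ is flat over $R[X_1,\dots,X_n]$; and as $R[X_1,\dots,X_n]$ is free, hence flat, over $R$, transitivity of flatness shows that $R[[X_1,\dots,X_n]]$ is flat over $R$. Alternatively one can induct on $n$, using that a power series ring over a Noetherian ring is again Noetherian.

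To upgrade flatness to faithful flatness it suffices to check that $\textbf{m}B \neq B$ for every maximal ideal $\textbf{m}$ of $R$. This is immediate: every element of $\textbf{m}B$ is a finite $R$-combination of power series whose coefficients lie in $\textbf{m}$, so its constant term lies in $\textbf{m}$; hence $1 \notin \textbf{m}B$ and $\textbf{m}B \subseteq (\textbf{m}, X_1,\dots,X_n)B \neq B$. A flat $R$-algebra all of whose maximal-ideal extensions are proper is faithfully flat, so $B$ is faithfully flat over $R$. With both hypotheses of the previous Corollary now verified, $\phi$ is a bijection.

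The only genuinely delicate point is the flatness of the power series ring, and this is exactly where the Noetherian hypothesis is indispensable: without it $R[[X]]$ need not be flat over $R$, and the argument would collapse. Everything else, the weak content property and the properness of the extended maximal ideals, is either quoted from earlier in the paper or entirely routine.
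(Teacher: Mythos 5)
Your proposal is correct and follows exactly the route the paper intends: the corollary is stated without proof precisely because it is an immediate application of the preceding corollary, once one knows that $R[[X_1,\dots,X_n]]$ is a weak content $R$-algebra (quoted from [R, p.~331] for Noetherian $R$) and faithfully flat over $R$. Your filled-in verification of faithful flatness --- flatness via the completion of the Noetherian polynomial ring together with transitivity, upgraded by checking $\textbf{m}B \neq B$ for every maximal ideal $\textbf{m}$ --- is the standard argument and is sound.
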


\section{Content algebras over rings having few zero-divisors}

For a ring $R$, by $Z(R)$, we mean the set of zero-divisors of $R$. In [Dav], it has been defined that a ring $R$ has \textit{few zero-divisors}, if $Z(R)$ is a finite union of prime ideals. We present the following definition to prove some other theorems related to content algebras.

 \begin{definition}
  A ring $R$ has \textit{very few zero-divisors}, if $Z(R)$ is a finite union of prime ideals in $\Ass(R)$.
 \end{definition}

\begin{theorem}
 Let $R$ be a ring that has very few zero-divisors. If $B$ is a content $R$-algebra, then $B$ has very few zero-divisors too.
\end{theorem}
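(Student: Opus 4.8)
The plan is to pin down $Z(B)$ completely as an explicit finite union of extended primes. Write the hypothesis as $Z(R) = \bigcup_{i=1}^n \mathfrak{p}_i$ with each $\mathfrak{p}_i \in \Ass(R)$; the natural candidate is to show $Z(B) = \bigcup_{i=1}^n \mathfrak{p}_i B$ and then verify that each $\mathfrak{p}_i B$ is a prime ideal of $B$ lying in $\Ass(B)$. The entire argument should run through the content function, converting zero-divisorhood in $B$ into an inclusion of ideals in $R$.

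First I would translate zero-divisorhood into a statement about contents. By McCoy's property for content algebras (recalled just before the associated-primes theorem), $g \in B$ is a zero-divisor iff $rg = 0$ for some nonzero $r \in R$. Combining this with the faithful-flatness identity $c(rg) = r\,c(g)$ and the fact that $c(h) = 0$ forces $h = 0$ in a content module (since $h \in c(h)B$), I would establish the characterization $g \in Z(B) \iff c(g) \subseteq Z(R)$. For the forward direction, if $rg = 0$ with $r \neq 0$ then $r\,c(g) = c(rg) = 0$, so every element of $c(g)$ is annihilated by the nonzero $r$ and hence lies in $Z(R)$. For the converse, if $c(g) \subseteq Z(R) = \bigcup_{i=1}^n \mathfrak{p}_i$, then prime avoidance—legitimate because $c(g)$ is finitely generated for content modules—places $c(g) \subseteq \mathfrak{p}_i$ for some $i$; writing $\mathfrak{p}_i = \Ann(x)$ with $x \neq 0$ gives $c(xg) = x\,c(g) = 0$, whence $xg = 0$ and $g \in Z(B)$.

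The same bookkeeping yields the set equality $Z(B) = \bigcup_{i=1}^n \mathfrak{p}_i B$: if $c(g) \subseteq \mathfrak{p}_i$ then $g \in c(g)B \subseteq \mathfrak{p}_i B$, while any $g \in \mathfrak{p}_i B$ satisfies $c(g) \subseteq \mathfrak{p}_i \subseteq Z(R)$ by the very definition of the content function, so $g \in Z(B)$. To promote this to the structural conclusion, I would note that each $\mathfrak{p}_i B$ is prime because primes extend to primes in content algebras, and that each $\mathfrak{p}_i B \in \Ass(B)$ follows from the earlier theorem on associated primes applied with $M = R$: since $\mathfrak{p}_i \in \Ass_R(R) = \Ass(R)$, we get $\mathfrak{p}_i B \in \Ass_B(R \otimes_R B) = \Ass(B)$. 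Thus $Z(B)$ is a finite union of primes in $\Ass(B)$, i.e. $B$ has very few zero-divisors. I expect the only delicate point to be the content characterization itself—specifically the correct invocation of prime avoidance via the finite generation of $c(g)$ together with the associated-prime witness $\mathfrak{p}_i = \Ann(x)$—since everything afterward follows formally from results already in hand.
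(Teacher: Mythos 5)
Your proposal is correct and follows essentially the same route as the paper's proof: both establish $Z(B)=\bigcup_{i=1}^n \mathfrak{p}_iB$ via McCoy's property and $c(rg)=rc(g)$ for one inclusion, prime avoidance plus the associated-prime witness $\mathfrak{p}_i=\Ann(x)$ for the other, and then invoke the earlier theorem on associated primes (with $M=R$) to conclude $\mathfrak{p}_iB\in\Ass_B(B)$. Your intermediate characterization $g\in Z(B)\iff c(g)\subseteq Z(R)$ is just a slightly more explicit packaging of the same argument (and, incidentally, prime avoidance needs no finite generation hypothesis on $c(g)$).
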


\begin{proof}
 Let $Z(R) = \textbf{p}_1\cup \textbf{p}_2\cup \cdots \cup \textbf{p}_n$, where $\textbf{p}_i \in \Ass_R(R)$ for all $1 \leq i \leq n$. We will show that $Z(B) = \textbf{p}_1B\cup \textbf{p}_2B\cup \cdots \cup \textbf{p}_nB$. Let $g \in Z(B)$, so there exists an $r \in R- \lbrace 0 \rbrace $ such that $rg = 0$ and so $rc(g) = (0)$. Therefore $c(g) \subseteq Z(R)$ and according to Prime Avoidance Theorem, we have $c(g) \subseteq \textbf{p}_i$, for some $1 \leq i \leq n$ and therefore $g \in \textbf{p}_iB$. Now let $g \in \textbf{p}_1B\cup \textbf{p}_2B\cup \cdots \cup \textbf{p}_nB$ so there exists an $i$ such that $g \in \textbf{p}_iB$, so $c(g) \subseteq \textbf{p}_i$ and $c(g)$ has a nonzero annihilator and this means that g is a zero-divisor of $B$. Note that $\textbf{p}_iB \in \Ass_B(B)$, for all $1 \leq i \leq n$.
\end{proof}

\begin{remark}
Let $R$ be a ring and consider the following three conditions on $R$:

\begin{enumerate}
 \item $R$ is a Noetherian ring.
 \item $R$ has very few zero-divisors.
 \item $R$ has few zero-divisors.
\end{enumerate}

Then, $(1) \rightarrow (2) \rightarrow (3)$ and none of the implications is reversible.
\end{remark}

\begin{proof}
For  $(1) \rightarrow(2) $ use [K, p. 55]. It is obvious that $(2) \rightarrow (3)$.

Suppose $k$ is a field, $A=k[X_1, X_2, X_3,\ldots,X_n,\ldots]$ and $\textbf{m} =(X_1, X_2, X_3,\ldots, X_n,\ldots)$ and at last $\textbf{a}=(X_1^2, X_2^2, X_3^2,\ldots, X_n^2,\ldots)$. Since $A$ is a content $k$-algebra and $k$ has very few zero-divisors, $A$ has very few zero-divisors while it is not a Noetherian ring. Also consider the ring $R=A/\textbf{a}$. It is easy to check that $R$ is a quasi-local ring with the only prime ideal $\textbf{m}/\textbf{a}$ and $Z(R)=\textbf{m}/\textbf{a}$ and finally $\textbf{m}/\textbf{a}\notin \Ass_R(R)$. Note that $\Ass_R(R)=\emptyset$.
\end{proof}

Now we bring the following definition from [HK] and prove some other results for content algebras.

\begin{definition}
  A ring $R$ has \textit{Property (A)}, if each finitely generated ideal $I \subseteq Z(R)$ has a nonzero annihilator.
 \end{definition}

Let $R$ be a ring. If $R$ has very few zero-divisors (for example if $R$ is Noetherian), then $R$ has Property (A) [K, Theorem 82, p. 56], but there are some non-Noetherian rings which do not have Property (A) [K, Exercise 7, p. 63]. The class of non-Noetherian rings having Property (A) is quite large [H, p. 2].

\begin{theorem}
 Let $B$ be a content $R$-algebra such that $R$ has Property (A). Then $T(B)$ is a content $T(R)$-algebra, where by $T(R)$, we mean total quotient ring of $R$.
\end{theorem}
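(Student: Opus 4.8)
The plan is to realize $T(B)$ as a content $T(R)$-algebra by transporting the content structure of $B$ over $R$ along the localization maps, with Property (A) supplying the one genuinely new ingredient. First I would record the $T(R)$-algebra structure. Writing $S = R \setminus Z(R)$, so that $T(R) = S^{-1}R$, I claim every $s \in S$ is a non-zerodivisor of $B$: if $sg = 0$ for some $g \in B$, then axiom (2) gives $s\,c(g) = c(sg) = 0$, and since $s$ is a non-zerodivisor of $R$ this forces $c(g) = 0$, hence $g \in c(g)B = 0$. Thus $S \subseteq B \setminus Z(B)$, and the composite $T(R) = S^{-1}R \to T(B)$ makes $T(B)$ a $T(R)$-algebra whose elements are $g/w$ with $g \in B$ and $w$ a non-zerodivisor of $B$.

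The crucial step, and the only one using Property (A), is the following: for every non-zerodivisor $w$ of $B$ one has $c(w)\,T(R) = T(R)$. By McCoy's property (established earlier for content algebras, [OR, 6.1]), $w \notin Z(B)$ means $\Ann_R(c(w)) = 0$; since $c(w)$ is finitely generated, Property (A) forces $c(w) \not\subseteq Z(R)$, so $c(w)$ contains some $s \in S$, whose image is a unit of $T(R)$, giving $c(w)T(R) = T(R)$. Combined with the Dedekind-Mertens formula for $B$, this yields the cancellation rule $c(fg)\,T(R) = c(g)\,T(R)$ whenever $c(f)T(R) = T(R)$: extend $c(f)^n c(g) = c(f)^{n-1}c(fg)$ to $T(R)$ and use $c(f)^kT(R) = T(R)$. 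In particular the extended content of a numerator is unchanged after clearing or introducing non-zerodivisor denominators, which is exactly what is needed to make the content of $g/w$ depend only on $g$.

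With this in hand I would prove that the content function of the $T(R)$-module $T(B)$ is given by $c_{T(R)}(g/w) = c_B(g)\,T(R)$, which simultaneously shows $T(B)$ is a content module and identifies its content. For the module property, $g \in c_B(g)B$ gives $g/w \in (c_B(g)T(R))T(B)$. For minimality, if $g/w \in J\,T(B)$ for an ideal $J$ of $T(R)$, then $g \in J\,T(B)$; writing $J = I\,T(R)$ with $I = J \cap R$ and clearing denominators produces a non-zerodivisor $W_0$ of $B$ with $W_0 g \in IB$, whence $c_B(W_0 g) \subseteq I$, and the cancellation rule applied to $W_0$ gives $c_B(g)T(R) = c_B(W_0 g)T(R) \subseteq J$. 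Well-definedness across different representations of $g/w$ follows by the same computation.

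Finally the remaining axioms transfer formally. For faithful flatness, with $r = a/s \in T(R)$ one has $c_{T(R)}(r \cdot g/w) = c_B(ag)T(R) = a\,c_B(g)T(R) = r\,c_{T(R)}(g/w)$ using axiom (2) for $B$ and the fact that $s$ is a unit of $T(R)$, while $c(1_{T(B)}) = c_B(1_B)T(R) = T(R)$ gives $c(T(B)) = T(R)$. The Dedekind-Mertens formula passes through verbatim: extending $c_B(f)^n c_B(g) = c_B(f)^{n-1}c_B(fg)$ to $T(R)$ yields $c(\phi)^n c(\psi) = c(\phi)^{n-1}c(\phi\psi)$ for $\phi = f/w_1$, $\psi = g/w_2$. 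The main obstacle is concentrated entirely in the second and third paragraphs: making the content of $g/w$ well defined and minimal requires knowing that denominators contribute nothing to the extended content, and it is precisely Property (A), via McCoy, that guarantees this. Without it a non-zerodivisor of $B$ could have its content trapped inside $Z(R)$, and the formula $c_{T(R)}(g/w) = c_B(g)T(R)$ would break down.
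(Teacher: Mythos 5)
Your proposal is correct, and its pivotal step is exactly the one the paper uses: Property (A) combined with McCoy's property for content algebras [OR, 6.1] shows that every non-zerodivisor $w$ of $B$ satisfies $c(w)\not\subseteq Z(R)$, i.e.\ $c(w)$ meets $S=R\setminus Z(R)$ (the paper phrases the contrapositive: $c(f)\cap S=\emptyset$ forces $f\in Z(B)$). The two arguments diverge in what this fact is fed into. The paper stops there and invokes the localization theorem of Ohm and Rush [OR, Theorem 6.2, p.~64], which asserts that this compatibility between the multiplicative sets $S'=B\setminus Z(B)$ and $S=S'\cap R$ is precisely the condition under which $S'^{-1}B$ is a content $S^{-1}R$-algebra; so the paper's proof is three lines plus a citation. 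You instead reprove that localization theorem by hand in the present case: you derive the explicit formula $c_{T(R)}(g/w)=c_B(g)T(R)$ (via your cancellation rule, which is Dedekind-Mertens together with the key fact) and then verify the three content-algebra axioms directly. What your route buys is self-containedness and an explicit description of the content function on $T(B)$, which the paper never writes down; what it costs is a page of bookkeeping (contracting $J$ to $I=J\cap R$, clearing denominators to get $W_0g\in IB$, cancelling $c_B(W_0g)T(R)=c_B(g)T(R)$) that [OR, Theorem 6.2] packages once and for all for arbitrary multiplicative sets. A small bonus of your version: your first paragraph, showing that elements of $S$ are non-zerodivisors of $B$, supplies the justification for the paper's unproved opening assertion that $S'\cap R=R-Z(R)$, and hence for $T(B)$ being a $T(R)$-algebra in the first place.
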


\begin{proof}
 Let $S^ \prime = B-Z(B)$. If $S = S^ \prime \cap R$, then $S = R-Z(R)$. We prove that if $c(f) \cap S = \emptyset $, then $f \not\in S^ \prime$. In fact when $c(f) \cap S = \emptyset $, then $c(f) \subseteq Z(R)$ and since $R$ has Property (A), $c(f)$ has a nonzero annihilator. This means that $f$ is a zero-divisor of $B$ and according to [OR, Theorem 6.2, p. 64] the proof is complete.
\end{proof}

\begin{theorem}
 Let $B$ be a content $R$-algebra such that the content function $c: B \longrightarrow \FId(R)$ is onto, where by $\FId(R)$, we mean the set of finitely generated ideals of $R$. The following statements are equivalent:

\begin{enumerate}
 \item $R$ has Property (A).
 \item For all $f \in B$, $f$ is a regular element of $B$ iff $c(f)$ is a regular ideal of $R$.
\end{enumerate}

\begin{proof}
 $(1) \rightarrow (2)$: Let $R$ have Property (A). If $f \in B$ is regular, then for all nonzero $r \in R$, $rf \not= 0$ and so for all nonzero $r \in R$, $rc(f) \not= (0)$, i.e. $\text{Ann}(c(f)) = (0)$ and according to the definition of Property (A), $c(f) \not\subseteq Z(R)$. This means that $c(f)$ is a regular ideal of $R$. Now let $c(f)$ be a regular ideal of $R$, so $c(f) \not\subseteq Z(R)$ and therefore $\text{Ann}(c(f)) = (0)$. This means that for all nonzero $r \in R$, $rc(f) \not= (0)$, hence for all nonzero $r \in R$, $rf \not= 0$. Since $B$ is a content $R$-algebra, $f$ is not a zero-divisor of $B$.

$(2) \rightarrow (1)$: Let $I$ be a finitely generated ideal of $R$ such that $I \subseteq Z(R)$. Since the content function $c: B \longrightarrow \FId(R)$ is onto, there exists an $f \in B$ such that $c(f) = I$. But $c(f)$ is not a regular ideal of $R$, therefore according to our assumption, $f$ is not a regular element of $B$. Since $B$ is a content $R$-algebra, there exists a nonzero $r \in R$ such that $rf = 0$ and this means that $rI = (0)$, i.e. $I$ has a nonzero annihilator.
\end{proof}

\begin{remark}
 In the above theorem the surjectivity condition for the content function $c$ is necessary, because obviously $R$ is a content $R$-algebra and the condition (2) is satisfied, while one can choose the ring $R$ such that it does not have Property (A) [K, Exercise 7, p. 63].
\end{remark}

\end{theorem}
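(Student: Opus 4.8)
The plan is to prove the two implications separately, with the content-algebra form of McCoy's property serving as the common engine. Throughout I read ``regular ideal of $R$'' as an ideal not contained in $Z(R)$, equivalently an ideal containing a regular element. The three facts I will lean on are: the faithful-flatness identity $c(rf)=rc(f)$; the content-module fact that $c(x)=(0)$ exactly when $x=0$ (immediate from $x\in c(x)B$); and the McCoy statement for content algebras, that $f\in Z(B)$ iff $rf=0$ for some nonzero $r\in R$ [OR, 6.1]. Together these let me translate any assertion that $f$ is a zero-divisor of $B$ into an assertion about the annihilator of the finitely generated ideal $c(f)\subseteq R$.

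For $(1)\Rightarrow(2)$ I would first establish, independently of Property (A), the pivot equivalence that $f$ is regular in $B$ iff $\Ann(c(f))=(0)$. Indeed $f$ is regular iff no nonzero $r\in R$ kills $f$ (McCoy), and $rf=0$ iff $rc(f)=c(rf)=(0)$ iff $r\in\Ann(c(f))$, using $c(rf)=rc(f)$ together with $c(x)=(0)\Leftrightarrow x=0$. Next, since $c(f)$ is always finitely generated, Property (A) supplies the contrapositive implication $\Ann(c(f))=(0)\Rightarrow c(f)\not\subseteq Z(R)$, i.e.\ $c(f)$ is regular; the reverse, $c(f)$ regular $\Rightarrow\Ann(c(f))=(0)$, is automatic because a nonzero annihilator of $c(f)$ would kill a regular element of $c(f)$. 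Chaining these equivalences yields $f$ regular $\iff c(f)$ regular.

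For $(2)\Rightarrow(1)$ the surjectivity hypothesis enters. Given a finitely generated $I\subseteq Z(R)$, I choose $f\in B$ with $c(f)=I$, which is possible precisely because $c\colon B\to\FId(R)$ is onto. Then $c(f)=I\subseteq Z(R)$ is not regular, so by (2) the element $f$ is not regular, hence a zero-divisor of $B$; McCoy then produces a nonzero $r\in R$ with $rf=0$, whence $rI=rc(f)=c(rf)=(0)$ and $\Ann(I)\neq(0)$. This is exactly Property (A).

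I do not anticipate a serious obstacle: the argument is essentially a dictionary translation, and the only genuinely nontrivial external input is the content-algebra McCoy property. The single point needing care is the role of surjectivity, which is used solely in $(2)\Rightarrow(1)$ to realize an arbitrary finitely generated subideal of $Z(R)$ as some $c(f)$; without it the implication can fail, as one sees already by taking $B=R$ (so that (2) holds trivially) with $R$ a ring lacking Property (A).
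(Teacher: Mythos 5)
Your proof is correct and takes essentially the same route as the paper's own argument: both directions rest on the content-algebra McCoy property, the translation $rf=0 \Leftrightarrow rc(f)=(0)$, Property (A) applied to the finitely generated ideal $c(f)$, and (for $(2)\Rightarrow(1)$) surjectivity of $c$ to realize a given finitely generated ideal inside $Z(R)$ as a content. Your explicit isolation of the pivot equivalence, that $f$ is regular in $B$ iff $\Ann(c(f))=(0)$, is merely a more structured presentation of the same chain of implications the paper writes out inline, and your closing observation about $B=R$ matches the paper's remark on the necessity of surjectivity.
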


\begin{theorem}
 Let $R$ have property (A) and $B$ be a content $R$-algebra. Then $Z(B)$ is a finite union of prime ideals in $\Min(B)$ iff $Z(R)$ is a finite union of prime ideals in $\Min(R)$.
\end{theorem}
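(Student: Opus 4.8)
The plan is to reduce everything to a single description of the zero-divisors of $B$ in terms of content. First I would record the governing fact: for $g \in B$, McCoy's property for content algebras ([OR, 6.1, p. 63]) gives that $g \in Z(B)$ iff $rg = 0$ for some $r \in R-\lbrace 0 \rbrace$, and since $c(rg) = rc(g)$ this is equivalent to saying that the finitely generated ideal $c(g)$ has a nonzero annihilator. Because $R$ has Property (A), a finitely generated ideal has a nonzero annihilator precisely when it is contained in $Z(R)$; hence
$$ g \in Z(B) \iff c(g) \subseteq Z(R). $$
Specializing to $g = r \in R$, where $c(r) = (r)$, also gives $Z(R) = Z(B) \cap R$. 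Finally I would invoke the minimal-prime correspondence from the Corollary: the map $\phi : \Min(R) \to \Min(B)$, $\textbf{p} \mapsto \textbf{p}B$, is a bijection, and $\textbf{p}B \cap R = \textbf{p}$ for every prime $\textbf{p}$ of $R$.

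For the implication ($\Leftarrow$), suppose $Z(R) = \textbf{p}_1 \cup \cdots \cup \textbf{p}_n$ with each $\textbf{p}_i \in \Min(R)$. By the bijection $\phi$, each $\textbf{p}_i B$ lies in $\Min(B)$, so it suffices to show $Z(B) = \textbf{p}_1 B \cup \cdots \cup \textbf{p}_n B$. If $g \in Z(B)$ then $c(g) \subseteq Z(R) = \bigcup_i \textbf{p}_i$, and since $c(g)$ is finitely generated the Prime Avoidance Theorem yields $c(g) \subseteq \textbf{p}_i$ for some $i$; as $g \in c(g)B$ by the content-module property, we get $g \in \textbf{p}_i B$. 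Conversely, if $g \in \textbf{p}_i B$ then $c(g) \subseteq \textbf{p}_i \subseteq Z(R)$ straight from the definition of the content function, so $g \in Z(B)$. This proves the equality.

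For the implication ($\Rightarrow$), suppose $Z(B) = Q_1 \cup \cdots \cup Q_m$ with each $Q_j \in \Min(B)$. Since $\phi$ is onto, each $Q_j = \textbf{q}_j B$ for a unique $\textbf{q}_j \in \Min(R)$. Intersecting with $R$ and using $Z(R) = Z(B) \cap R$ together with $\textbf{q}_j B \cap R = \textbf{q}_j$ gives
$$ Z(R) = Z(B) \cap R = \bigcup_{j=1}^m (\textbf{q}_j B \cap R) = \bigcup_{j=1}^m \textbf{q}_j, $$
a finite union of minimal primes of $R$, as desired.

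The routine verifications (the content identities and Prime Avoidance) are standard; the real content is packaged in the two earlier results I lean on, namely the zero-divisor description coming from McCoy's property plus Property (A), and the bijection $\phi$ between minimal primes. The main obstacle is the ($\Rightarrow$) direction, where I must use that every minimal prime of $B$ contracts to a minimal prime of $R$ (surjectivity of $\phi$); without that one could only conclude that $Z(R)$ is a union of primes, not necessarily of \emph{minimal} ones.
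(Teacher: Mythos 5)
Your proof is correct and takes essentially the same route the paper intends: the paper's proof is only the remark that one argues as in Theorem 9 (the very-few-zero-divisors case), replacing associated primes by minimal primes and invoking the bijection $\phi:\Min(R)\to\Min(B)$ of Theorem 5/Corollary 6 --- which is exactly the argument you spelled out, including the key equivalence $g\in Z(B) \iff c(g)\subseteq Z(R)$ (McCoy plus Property (A)) and the use of surjectivity of $\phi$ together with $\textbf{p}B\cap R=\textbf{p}$ for the forward direction. You have simply made explicit the details the paper leaves to the reader.
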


\begin{proof}
 The proof is similar to the proof of Theorem 9 by considering Theorem 5.
\end{proof}

Please note that if $R$ is a Noetherian reduced ring, then $Z(R)$ is a finite union of prime ideals in $\Min(R)$ (Refer to [K, Theorem 88, p. 59] and [H, Corollary 2.4]).

It is well-known that a ring $R$ has few zero-divisors iff its classical quotient ring $T(R)$ is semi-local [Dav]. We may suppose that $Z(R) = \bigcup _{i=1}^n \textbf{p}_i$ such that $\textbf{p}_i \nsubseteq \bigcup _{j=1 \wedge j \neq i}^n \textbf{p}_j$ for all $ 1\leq i \leq n$. Then we have $\textbf{p}_i \nsubseteq \textbf{p}_j$ for all $i \neq j$ and by Prime Avoidance Theorem, these prime ideals are uniquely determined. In such a case, it is easy to see that $\Max(T(R))= \{\textbf{p}_1T(R), \dots , \textbf{p}_nT(R) \}$, where by $T(R)$ we mean total quotient ring of $R$. This is the base for the following definition.

\begin{definition}
 A ring $R$ is said to have \textit{few zero-divisors of degree} $n$, if $R$ has few zero-divisors and $n = \card \Max(T(R))$. In such a case, we write $\zd(R) = n$.
\end{definition}

\begin{remark}
If $R_i$ is a ring having few zero-divisors of degree $k_i$ for all $1\leq i \leq n$, then $\zd(R_1\times \dots \times R_n) = \zd(R_1)+ \dots + \zd(R_n)$.
\end{remark}

Now we give the following theorem:

\begin{theorem}
 Let $B$ be a content $R$-algebra. Then the following statements hold for all natural numbers $n$:

\begin{enumerate}
 \item If $\zd(B) = n$, then $\zd(R) \leq n$.
 \item If $R$ has Property (A) and $\zd(R) = n$, then $\zd(B) = n$.
 \item If the content function $c: B \longrightarrow \FId(R)$ is onto, then $\zd(B) = n$ iff $\zd(R) = n$ and $R$ has Property (A).
\end{enumerate}

\end{theorem}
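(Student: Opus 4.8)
The plan is to funnel all three statements through one dictionary between zero-divisors of $B$ and annihilators in $R$. The central tool is a restatement of McCoy's property [OR, 6.1]: for $f\in B$ one has $rf=0$ iff $rc(f)=(0)$ (one direction from $c(rf)=rc(f)$, the other since $f\in c(f)B$), so $f\in Z(B)$ iff $c(f)$ has a nonzero annihilator in $R$. I would also record two auxiliary facts used throughout: for an ideal $I$ of $R$, $g\in IB$ iff $c(f)\subseteq I$ (because $c(g)$ is the smallest ideal $I$ with $g\in IB$ and $g\in c(g)B$); and, as in the Corollary to Theorem~4, each $\textbf{p}B$ is prime with $\textbf{p}B\cap R=\textbf{p}$, using $c(r)=(r)$ for $r\in R$.

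For (1), I would first show $Z(R)=Z(B)\cap R$: for $r\in R$ we have $c(r)=(r)$, so by the central tool $r\in Z(B)$ iff $(r)$ has a nonzero annihilator iff $r\in Z(R)$. Writing $Z(B)=\bigcup_{i=1}^{n}Q_i$ with the $Q_i$ the maximal primes in $Z(B)$, contraction then gives $Z(R)=\bigcup_{i=1}^{n}(Q_i\cap R)$, a finite union of primes, so $R$ has few zero-divisors. Any maximal prime $\textbf{p}$ of $Z(R)$ satisfies $\textbf{p}\subseteq\bigcup_i(Q_i\cap R)$, hence by Prime Avoidance $\textbf{p}\subseteq Q_i\cap R\subseteq Z(R)$ for some $i$, forcing $\textbf{p}=Q_i\cap R$ by maximality; thus the maximal primes of $Z(R)$ form a subset of $\{Q_i\cap R\}$ and $\zd(R)\leq n$.

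For (2), write $Z(R)=\bigcup_{i=1}^{n}\textbf{p}_i$ with the $\textbf{p}_i$ the (pairwise incomparable) maximal primes. Since $R$ has Property~(A), a finitely generated ideal has a nonzero annihilator iff it lies in $Z(R)$; combined with the central tool, $f\in Z(B)$ iff $c(f)\subseteq Z(R)$ iff (Prime Avoidance) $c(f)\subseteq\textbf{p}_i$ for some $i$ iff $f\in\textbf{p}_iB$ for some $i$. Hence $Z(B)=\bigcup_{i=1}^{n}\textbf{p}_iB$, each $\textbf{p}_iB$ prime, so $B$ has few zero-divisors. Because $\textbf{p}_iB\cap R=\textbf{p}_i$, an inclusion $\textbf{p}_iB\subseteq\textbf{p}_jB$ would contract to $\textbf{p}_i\subseteq\textbf{p}_j$ and force $i=j$; so the $\textbf{p}_iB$ are distinct and incomparable, and a further Prime Avoidance argument shows they are exactly the maximal primes of $Z(B)$. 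Therefore $\zd(B)=n$.

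For (3), the implication assuming $\zd(R)=n$ and Property~(A) is precisely (2) (surjectivity is not needed here). For the converse, assume $\zd(B)=n$ and $c$ onto; part (1) already yields that $R$ has few zero-divisors with $\zd(R)\leq n$, so the real work is extracting Property~(A). Given a finitely generated $I\subseteq Z(R)$, surjectivity furnishes $f\in B$ with $c(f)=I$; writing $Z(B)=\bigcup_iQ_i$ as in (1), $I=c(f)\subseteq Z(R)=\bigcup_i(Q_i\cap R)$ gives $c(f)\subseteq Q_i\cap R$ for some $i$, whence $f\in c(f)B\subseteq(Q_i\cap R)B\subseteq Q_i\subseteq Z(B)$, and by the central tool $I=c(f)$ has a nonzero annihilator. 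So $R$ has Property~(A), and then (2) upgrades $\zd(R)\leq n$ to $\zd(R)=n$. I expect this last step to be the main obstacle: the surjectivity of $c$ is exactly what lets an arbitrary finitely generated $I\subseteq Z(R)$ be realized as some $c(f)$ and then pushed through the zero-divisor structure of $B$, and the Remark following the surjectivity theorem shows this hypothesis cannot be dropped.
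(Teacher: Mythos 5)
Your proposal is correct and follows essentially the same route as the paper's own proof: part (1) by contracting the maximal primes $Q_i$ of $Z(B)$ to $R$, part (2) by showing $Z(B)=\bigcup_i \textbf{p}_iB$ via Property (A), McCoy's property and Prime Avoidance, and part (3) by using surjectivity of $c$ to realize a finitely generated $I\subseteq Z(R)$ as $c(f)$, placing $f$ in some $Q_i$, and then invoking (2) to upgrade $\zd(R)\leq n$ to equality. If anything, your write-up is more careful than the paper's at the points where it identifies the contracted (resp.\ extended) primes as exactly the maximal primes of $Z(R)$ (resp.\ $Z(B)$), steps the paper leaves implicit.
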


\begin{proof}
 $(1)$: Let $Z(B) = \bigcup _{i=1}^n Q_i$. We prove that $Z(R) = \bigcup _{i=1}^n (Q_i \cap R)$. In order to do that let $r\in Z(R)$. Since $Z(R) \subseteq Z(B)$, there exists an $i$ such that $r\in Q_i$ and therefore $r\in Q_i \cap R$. Now let $r\in Q_i \cap R$ for some $i$, then $r\in Z(B)$, and this means that there exists a nonzero $g\in B$ such that $rg = 0$ and at last $rc(g) = 0$. Choose a nonzero $d\in c(g)$ and we have $rd=0$.

$(2)$: Note that similar to the proof of Theorem 9, if $Z(R)= \bigcup _{i=1}^n \textbf{p}_i$, then $Z(B)= \bigcup _{i=1}^n \textbf{p}_iB$. Also it is obvious that $\textbf{p}_iB \subseteq \textbf{p}_jB$ iff $\textbf{p}_i \subseteq \textbf{p}_j$ for all $1 \leq i,j \leq n$. These two imply that $\zd(B) = n$.

$3$: $(\leftarrow)$ is nothing but $(2)$. For proving $(\rightarrow)$, consider that by $(1)$, we have $\zd(R) \leq n$. Now we prove that ring $R$ has Property (A). Let $I \subseteq Z(R)$ be a finite ideal of $R$. Choose $f\in B$ such that $I=c(f)$. So $c(f) \subseteq Z(R)$ and by Prime Avoidance Theorem and $(1)$, there exists an $1 \leq i \leq n$ such that $c(f) \subseteq Q_i \cap R$. Therefore $f \in (Q_i \cap R)B$. But $(Q_i \cap R)B \subseteq Q_i$. So $f \in Z(B)$ and according to McCoy's property for content algebras, there exists a nonzero $r\in R$ such that $f.r=0$. This means that $I.r=0$ and $I$ has a nonzero annihilator. Now by $(2)$, we have $\zd(R) = n$.
\end{proof}

\begin{corollary}
 Let $R$ be a ring and $S$ a commutative, cancellative, torsion-free monoid. Then for all natural numbers $n$, $\zd(R[S]) = n$ iff $\zd(R)=n$ and $R$ has Property (A) .
\end{corollary}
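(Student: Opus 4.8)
The plan is to obtain this corollary as a direct instance of part $(3)$ of the preceding theorem, applied to the monoid algebra $B = R[S]$. That part asserts that, \emph{provided the content function $c \colon B \longrightarrow \FId(R)$ is onto}, one has $\zd(B) = n$ if and only if $\zd(R) = n$ and $R$ has Property (A). Thus the whole task reduces to verifying two hypotheses for $B = R[S]$: first, that $R[S]$ is genuinely a content $R$-algebra, and second, that its content function is surjective onto $\FId(R)$.

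The first hypothesis is immediate. Since $S$ is commutative, cancellative, and torsion-free, the implication $(5) \Rightarrow (1)$ of Theorem 3 shows at once that $B = R[S]$ is a content $R$-algebra. Here the content of an element $f = \sum_i r_i X^{s_i} \in R[S]$ is the ideal generated by its coefficients $r_i$, exactly as in the polynomial case.

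The second hypothesis is where the real work lies. Given a finitely generated ideal $I = (a_1, \dots, a_k)$ of $R$, I would produce a preimage by choosing \emph{distinct} monoid elements $s_1, \dots, s_k \in S$ and setting $f = \sum_{j=1}^k a_j X^{s_j}$, so that $c(f) = (a_1, \dots, a_k) = I$. The only thing that can obstruct this is the availability of $k$ distinct exponents, i.e. whether $S$ is large enough. The key observation is that any nontrivial cancellative torsion-free monoid is infinite: if $s \neq 0$, then $0, s, 2s, 3s, \dots$ are pairwise distinct, since $ms = ns$ with $m > n$ would give $(m-n)s = 0 = (m-n)\cdot 0$ by cancellation, whence $s = 0$ by torsion-freeness, a contradiction. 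Hence distinct exponents exist in any prescribed number, and $c$ is onto $\FId(R)$.

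I expect this surjectivity step to be the main obstacle, and within it the genuinely delicate point is the degenerate case $S = \{0\}$: there $R[S] = R$, the content function hits only principal ideals, surjectivity fails, and indeed the asserted equivalence collapses (one merely recovers $\zd(R[S]) = \zd(R)$ with no Property (A) being forced). So the argument tacitly requires $S$ to be nontrivial; granting this, the infinitude established above supplies the needed distinct monomials, surjectivity of $c$ holds, and invoking part $(3)$ of the preceding theorem with $B = R[S]$ finishes the proof.
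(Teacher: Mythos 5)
Your proof is correct and is precisely the paper's intended argument: the corollary is stated without proof as an immediate instance of Theorem 18(3), with $B=R[S]$ a content $R$-algebra by Theorem 3 (via $(5)\Rightarrow(1)$) and with content function onto $\FId(R)$ because any finitely generated ideal $(a_1,\dots,a_k)$ is the content of $\sum_{j} a_j X^{s_j}$ for distinct $s_j \in S$. Your caveat about the trivial monoid is a genuine, if minor, point that the paper glosses over: for $S=\{0\}$ one has $R[S]\cong R$, surjectivity of $c$ fails, and the stated equivalence is then false for a primal ring lacking Property (A) --- exactly the situation of Remark 14 --- so nontriviality of $S$ is indeed a tacit hypothesis.
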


\begin{definition}
 An element $r$ of a ring $R$ is said to be \textit{prime to an ideal} $I$ of $R$ if $I : (r) =I$, where by $I : (r)$, we mean the set of all members $c$ of $R$ such that $cr \in I$ ([ZS, p. 223]).
\end{definition}

\begin{theorem}
 Let $R$ be a ring, $I$ an ideal of $R$ and $B$ a content $R$-algebra. Then $f \in B$ is not prime to $IB$ iff $f.r \in IB$ for some $r \in R-I$.
\end{theorem}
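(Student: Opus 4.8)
The plan is to pass to the quotient and reduce the whole statement to McCoy's property, which is available for content algebras. I would write $\bar R = R/I$ and $\bar B = B/IB$, with reduction maps $x \mapsto \bar x$, and first observe that both sides of the desired equivalence translate cleanly into this quotient. Since $IB \subseteq (IB : (f))$ always holds, $f$ fails to be prime to $IB$ exactly when there is some $h \in B$ with $h \notin IB$ and $hf \in IB$, i.e. exactly when $\bar f$ is a zero-divisor of $\bar B$ (witnessed by the nonzero element $\bar h$). On the other side, for $r \in R$ we have $r \notin I \iff \bar r \neq 0$ and $rf \in IB \iff \bar r \bar f = 0$, so the right-hand condition says precisely that $\bar f$ is annihilated by a nonzero element of $\bar R$. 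Thus the theorem is equivalent to the statement that $\bar f \in Z(\bar B)$ iff $\bar f$ is killed by a nonzero element of $\bar R$, which is exactly McCoy's property for $\bar B$ over $\bar R$.

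Consequently the real content is to verify that $\bar B = B/IB$ is a content $\bar R$-algebra; once this is done, McCoy's property for content algebras ([OR, 6.1]) finishes the argument, and I may assume $I$ is proper, since otherwise $\bar B = 0$ and $R - I = \emptyset$ make both sides vacuously false. First I would record the elementary fact that in a content module $x \in JB \iff c_R(x) \subseteq J$ for every ideal $J$ of $R$, which is immediate from the definition of $c$ together with $x \in c_R(x)B$. Applying this to ideals $J \supseteq I$, I would show that $\bar B$ is a content $\bar R$-module with content function $c_{\bar R}(\bar x) = (c_R(x)+I)/I$: for $I \subseteq J$ the condition $\bar x \in (J/I)\bar B$ is equivalent to $x \in JB$, hence to $c_R(x) \subseteq J$, hence to $(c_R(x)+I)/I \subseteq J/I$, and intersecting over all such $J$ yields the displayed formula, while $\bar x \in c_{\bar R}(\bar x)\bar B$ follows from $x \in c_R(x)B$.

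With this formula in hand, the three defining conditions transfer by reduction modulo $I$, using $\overline{AB} = \bar A \bar B$ for ideals $A, B$ of $R$. The faithful-flatness identities become $c_{\bar R}(\bar r \bar f) = \overline{c_R(rf)} = \overline{r c_R(f)} = \bar r\, c_{\bar R}(\bar f)$ together with $c_{\bar R}(1_{\bar B}) = \overline{c_R(1_B)} = \bar R$, and the Dedekind--Mertens formula holds with the same exponent $n$ as for $B$, since reducing $c_R(f)^n c_R(g) = c_R(f)^{n-1} c_R(fg)$ modulo $I$ gives $c_{\bar R}(\bar f)^n c_{\bar R}(\bar g) = c_{\bar R}(\bar f)^{n-1} c_{\bar R}(\bar f \bar g)$. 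Hence $\bar B$ is a content $\bar R$-algebra and the chain of equivalences above closes the proof.

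I expect the middle step to be the main obstacle: establishing that $\bar B$ is genuinely a content $\bar R$-module with the expected content function. The quotient map $R \to \bar R$ is not flat, so one cannot simply invoke stability of content algebras under flat base change; the formula for $c_{\bar R}(\bar x)$ has to be computed directly, and it rests crucially on the equivalence $x \in JB \iff c_R(x) \subseteq J$, which is what lets the non-flat reduction interact correctly with the content function.
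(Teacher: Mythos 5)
Your proposal is correct and follows essentially the same route as the paper: both pass to the quotient, observe that $B/IB$ is a content $(R/I)$-algebra, and then invoke McCoy's property for content algebras to translate ``not prime to $IB$'' into annihilation by an element of $R-I$. The only difference is one of detail, not of method: the paper asserts the quotient fact without proof, whereas you verify it directly (including the formula $c_{\bar{R}}(\bar{x})=(c_R(x)+I)/I$ and the transfer of the Dedekind--Mertens formula), and you also make explicit the easy converse direction and the degenerate case $I=R$, both of which the paper leaves implicit.
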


\begin{proof}
 If $I$ is an ideal of $R$ and $B$ a content $R$-algebra, then $B/IB$ is a content $(R/I)$-algebra. Assume that $f \in B$ is not prime to $IB$, so there exists $g \in B$ such that $fg \in IB$, while $g \notin IB$. This means that $f+IB$ is a zero-divisor of $B/IB$ and according to McCoy's property, we have $(f+IB)(r+IB)=IB$ for some $r \in R-I$.
\end{proof}

Let $I$ be an ideal of $R$. We denote the set of all elements of $R$ that are not prime to $I$ by $S(I)$. It is obvious that $r\in S(I)$ iff $r+I$ is a zero-divisor of the quotient ring $R/I$. The ideal $I$ is said to be primal if $S(I)$ forms an ideal and in such a case, $S(I)$ is a prime ideal of $R$. A ring $R$ is said to be \textit{primal}, if the zero ideal of $R$ is primal [Dau]. It is obvious that $R$ is primal iff $Z(R)$ is an ideal of $R$. It is easy to check that if $Z(R)$ is an ideal of $R$, it is a prime ideal and therefore $R$ is primal iff $R$ has few zero-divisors of degree one, i.e. $\zd(R)=1$.

\begin{theorem}
 Let $B$ be a content $R$-algebra. Then the following statements hold:

\begin{enumerate}
 \item If $B$ is primal, then $R$ is primal and $Z(B) = Z(R)B$.
 \item If $R$ is primal and has Property (A), then $B$ is primal, has Property (A) and $Z(B) = Z(R)B$.
 \item If the content function $c: B \longrightarrow \FId(R)$ is onto, then $B$ is primal iff $R$ is primal and has Property (A).
\end{enumerate}

\end{theorem}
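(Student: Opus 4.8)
The plan is to exploit the dictionary, recorded just before the statement, that a ring is primal exactly when it has few zero-divisors of degree one, i.e. when $\zd = 1$. Under this identification all three primality assertions are the special case $n = 1$ of the preceding $\zd$-theorem, so most of the work is already done; what remains is to pin down the explicit identity $Z(B) = Z(R)B$ and to transfer Property (A). Throughout I will use McCoy's property for content algebras in the form ``$g \in Z(B)$ iff $c(g)$ has a nonzero annihilator'' (equivalently $rg = 0 \Leftrightarrow rc(g) = 0$ for $r \in R$), together with the fact that $g \in c(g)B$ because $B$ is a content module.

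For (1), I would first note that $B$ primal forces $\zd(B) = 1$, so part (1) of the $\zd$-theorem gives $\zd(R) \leq 1$; since every nonzero ring has $\zd \geq 1$ (its total quotient ring has a maximal ideal), we get $\zd(R) = 1$, i.e. $R$ is primal, say $Z(R) = \textbf{p}$ with $\textbf{p}$ prime. To obtain $Z(B) = \textbf{p}B$ I would prove the two inclusions separately. The inclusion $Z(B) \subseteq \textbf{p}B$ is the McCoy argument: if $g \in Z(B)$ then $c(g)$ is killed by a nonzero element of $R$, hence $c(g) \subseteq Z(R) = \textbf{p}$, and so $g \in c(g)B \subseteq \textbf{p}B$. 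For the reverse inclusion I would observe that $Z(R) \subseteq Z(B)$ (a nonzero $R$-annihilator remains nonzero in $B$ by faithful flatness) and then crucially use that $B$ is primal, so that $Z(B)$ is an ideal of $B$; consequently $\textbf{p} = Z(R) \subseteq Z(B)$ forces $\textbf{p}B \subseteq Z(B)$.

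For (2), the hypotheses that $R$ is primal with Property (A) give $\zd(B) = 1$ by part (2) of the $\zd$-theorem at $n = 1$, so $B$ is primal, and the proof of that part already yields $Z(B) = \textbf{p}B = Z(R)B$. It then remains to show that $B$ inherits Property (A). Given a finitely generated ideal $J = (g_1, \dots, g_m) \subseteq Z(B)$, each $c(g_i)$ has a nonzero annihilator, so $c(g_i) \subseteq Z(R)$; thus the finitely generated $R$-ideal $c(g_1) + \cdots + c(g_m) \subseteq Z(R)$ has, by Property (A) of $R$, a nonzero annihilator $r$. Then $rc(g_i) = 0$ and hence $rg_i = 0$ for every $i$, so $r$ (nonzero in $B$) annihilates $J$. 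For (3), the implication $(\Leftarrow)$ is exactly (2). For $(\Rightarrow)$, part (1) already gives that $R$ is primal, and Property (A) of $R$ follows from surjectivity of $c$: for a finitely generated $I \subseteq Z(R)$ choose $f \in B$ with $c(f) = I$, so that $f \in IB \subseteq Z(R)B = Z(B)$ by (1), and McCoy produces a nonzero $r$ with $rf = 0$, i.e. $rI = 0$; alternatively one may simply invoke part (3) of the $\zd$-theorem at $n = 1$.

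The step I expect to demand the most care is the reverse inclusion $Z(R)B \subseteq Z(B)$ in (1). Without Property (A) there is no direct way to manufacture a nonzero annihilator for $c(g)$ out of the bare containment $c(g) \subseteq Z(R)$ --- that is essentially what Property (A) would supply. The resolution is to sidestep annihilators entirely and lean instead on the hypothesis that $B$ itself is primal, so that the ideal $Z(B)$ absorbs the extension $\textbf{p}B$. Recognizing that it is the primality of $B$, rather than of $R$, that makes this inclusion automatic is the one genuinely non-formal point in the proof.
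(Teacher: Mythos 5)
Your proof is correct, and in substance it reproduces the paper's key steps: the inclusion $Z(B) \subseteq Z(R)B$ via McCoy's property together with $f \in c(f)B$; the observation that the reverse inclusion comes from $Z(B)$ being an ideal containing $Z(R)$ (in part (1)) or from Property (A) of $R$ (in part (2)); the summing of the finitely many contents $c(g_1)+\dots+c(g_m)$ to transfer Property (A) to $B$; and surjectivity of $c$ plus McCoy to pull Property (A) back to $R$ in part (3). The packaging differs, though: where you route the primality transfers through the $n=1$ case of Theorem 18 (via the dictionary ``primal iff $\zd=1$''), the paper argues directly --- in (1) it shows $Z(R)$ is closed under addition by taking $a,b \in Z(R)$, noting $a+b \in Z(B)$ because $Z(B)$ is an ideal, choosing a nonzero $g$ with $(a+b)g=0$, and passing to a nonzero $d \in c(g)$ --- and it then dismisses the identity $Z(B)=Z(R)B$ in (1) as ``easy to check.'' Your write-up supplies exactly the details hidden in that phrase, and your closing diagnosis is the right one: without Property (A) one cannot manufacture an annihilator for $c(g)$ from $c(g) \subseteq Z(R)$ alone, so the inclusion $Z(R)B \subseteq Z(B)$ in (1) must come from the primality of $B$ itself. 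The trade-off between the two routes: yours is more modular (it reuses Theorem 18 and avoids repeating any prime-avoidance/union argument) at the cost of making the theorem logically dependent on that earlier result; the paper's direct arguments keep the proof self-contained, and the addition-closure trick in (1) isolates why primality descends along an arbitrary content extension without ever mentioning $\zd$.
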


\begin{proof}
$(1)$: Assume that $Z(B)$ is an ideal of $B$. We show that $Z(R)$ is an ideal of $R$. For doing that it is enough to show that if $a,b \in Z(R)$, then $a+b \in Z(R)$. Let $a,b \in Z(R)$. Since $Z(R) \subseteq Z(B)$ and $Z(B)$ is an ideal of $B$, we have $a+b \in Z(B)$. This means that there exists a nonzero $g \in B$ such that $(a+b)g = 0$. Since $g\neq0$, we can choose $0 \neq d \in c(g)$ and we have $(a+b)d = 0$. Now it is easy to check that $Z(B) = Z(R)B$.

$(2)$: Let $R$ have Property (A) and $Z(R)$ be an ideal of $R$. We show that $Z(B) = Z(R)B$. Let $f \in Z(B)$, then there exists a nonzero $r \in R$ such that $f.r = 0$. Therefore we have $c(f) \subseteq Z(R)$ and since $Z(R)$ is an ideal of $R$, $f \in Z(R)B$. Now let $f \in Z(R)B$, then $c(f) \subseteq Z(R)$. Since $R$ has Property (A), $c(f)$ has a nonzero annihilator and this means that $f$ is a zero-divisor in $B$. So we have already shown that $Z(B)$ is an ideal of $B$ and therefore $B$ is primal. Finally we prove that $B$ has Property (A). Assume that $J=(f_1,f_2, \ldots, f_n) \subseteq Z(B)$. Therefore $c(f_1), c(f_2),\ldots, c(f_n) \subseteq Z(R)$. But $Z(R)$ is an ideal of $R$ and $c(f_i)$ is a finitely generated ideal of $R$ for any $1\leq i \leq n$, so $I=c(f_1)+c(f_2)+\cdots + c(f_n) \subseteq Z(R)$ is a finitely generated ideal of $R$ and there exists a nonzero $s \in R$ such that $sI=0$. This causes $sJ=0$ and $J$ has a nonzero annihilator in $B$.

$(3)$: We just need to prove that if $B$ is primal, then $R$ has Property (A). For doing that let $I \subseteq Z(R)$ be a finitely generated ideal of $R$. Since the content function is onto, there exists an $f\in B$ such that $I = c(f)$. Since $c(f) \subseteq Z(R)$, $f\in Z(B)$. According to McCoy's property for content algebras, we have $f\cdot r =0$ for some nonzero $r \in R$ and this means $I=c(f)$ has a nonzero annihilator and the proof is complete.
\end{proof}

\section{Zero-divisor graph of content algebras}

Let $R$ be a commutative ring with identity and proper zero-divisors, where by a proper zero-divisor we mean a zero-divisor different from zero. We let $Z(R)^*$ denote the set of proper zero-divisors of $R$. We consider the graph $\Gamma(R)$, called zero-divisor graph of $R$, whose vertices are the elements of $Z(R)^*$ and edges are those pairs of distinct proper zero-divisors $\{a,b\}$ such that $ab=0$.

Recall that a graph is said to be connected if for each pair of distinct vertices $v$ and $w$, there is a finite sequence of distinct vertices $v=v_1, v_2 ,\ldots , v_n = w$ such that each pair $\{v_i , v_{i+1} \}$ is an edge. Such a sequence is said to be a path and the distance, $d(v, w)$, between connected vertices $v$ and $w$ is the length of the shortest path connecting them. The diameter of a connected graph $G$ is the supremum of the distances between vertices and is denoted by $\diam(G)$. In [AL], zero-divisor graphs were studied and among many things, it was proved that any zero-divisor graph, $\Gamma(R)$, is connected with $0 \leq \diam(\Gamma(R)) \leq 3$ [AL, Theorem 2.3]. Note that the diameter is $0$ if the graph consists of a single vertex and a connected graph with more than one vertex has diameter $1$ if and only if it is complete; i.e., each pair of distinct vertices forms an edge.

In this section, we examine the preservation of diameter of zero-divisor graph under content extensions. What we do is the generalization of what it has been done for polynomial rings in [ACS] and [L]. The following lemmas are straightforward, but we bring them only for the sake of reference.

\begin{lemma}
 Let $R$ be a ring and $B$ a content $R$-algebra. Then the following statements hold:

\begin{enumerate}

\item $\Nil(B)=\Nil(R)B$, where by $\Nil(R)$ we mean all nilpotent elements of $R$.
\item $Z(R) \subseteq Z(B) \subseteq Z(R)B$.
\item $Z(R)^n=(0)$ iff $Z(B)^n=(0)$ for all $n\geq1$.

\end{enumerate}

\end{lemma}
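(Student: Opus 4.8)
The plan is to handle the three parts in the order (2), (1), (3), since (2) is the workhorse, (3) reduces to it formally, and (1) alone needs a prime-theoretic argument. Throughout I will lean on two facts already available: the content-module identity $f \in c(f)B$ for every $f \in B$, and the equivalence, immediate from the definition $c(f) = \bigcap\{I : f \in IB\}$ together with that identity, that for any ideal $I$ of $R$ one has $f \in IB$ if and only if $c(f) \subseteq I$. I will also use McCoy's property for content algebras (namely $g \in Z(B)$ iff $rg = 0$ for some nonzero $r \in R$) and the faithful-flatness identity $c(rf) = r\,c(f)$, together with the injectivity of $R \hookrightarrow B$ that faithful flatness provides.

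For (2), the inclusion $Z(R) \subseteq Z(B)$ is direct: given a nonzero $r \in Z(R)$, choose nonzero $s \in R$ with $rs = 0$; since $R \hookrightarrow B$, the element $s$ stays nonzero in $B$ and witnesses $r \in Z(B)$. For $Z(B) \subseteq Z(R)B$, take $f \in Z(B)$ and apply McCoy's property to obtain a nonzero $r \in R$ with $rf = 0$. Then $r\,c(f) = c(rf) = c(0) = (0)$, so every element of $c(f)$ is annihilated by the nonzero $r$, whence $c(f) \subseteq Z(R)$; the content-module identity $f \in c(f)B$ then gives $f \in Z(R)B$.

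For (1), the inclusion $\Nil(R)B \subseteq \Nil(B)$ holds because each $a \in \Nil(R)$ is nilpotent in $B$ and $\Nil(B)$ is an ideal, so it contains the ideal generated by the image of $\Nil(R)$. The reverse inclusion is the substantive point. Take a nilpotent $f \in B$. For every prime $\pp$ of $R$ the extended ideal $\pp B$ is prime, since primes extend to primes in content algebras; as $f$ is nilpotent it lies in every prime of $B$, in particular $f \in \pp B$, and therefore $c(f) \subseteq \pp$. Intersecting over all primes $\pp$ of $R$ yields $c(f) \subseteq \bigcap_{\pp} \pp = \Nil(R)$, and then $f \in c(f)B \subseteq \Nil(R)B$. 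Part (3) is then purely formal: since $Z(R) \subseteq Z(B)$ by (2), any product of $n$ elements of $Z(R)$ is a product of $n$ elements of $Z(B)$, so $Z(B)^n = (0)$ forces $Z(R)^n = (0)$; conversely $Z(R)^n = (0)$ gives $(Z(R)B)^n = Z(R)^nB = (0)$, and from $Z(B) \subseteq Z(R)B$ we conclude $Z(B)^n \subseteq (Z(R)B)^n = (0)$, using that ideal extension commutes with powers, $(IB)^n = I^nB$.

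The main obstacle is isolated in part (1). One is tempted to prove $\Nil(B) = (\bigcap_{\pp}\pp)B$ by commuting extension past the possibly infinite intersection $\bigcap_{\pp} \pp B$, but flatness does not license interchange with infinite intersections. The argument above avoids this by working with a single nilpotent element $f$ and using $f \in c(f)B$ to reduce the question to the finitely generated ideal $c(f)$, for which the only thing needed is the containment $c(f) \subseteq \pp$ prime-by-prime; no interchange of operations is required.
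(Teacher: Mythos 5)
Your proof is correct, but it routes around two of the paper's own ingredients, so the differences are worth recording. For (2) you and the paper do the same thing (the paper simply declares it obvious from the definition of content modules and [OR, 6.1]; you spell out McCoy's property and $c(rf)=rc(f)$). For (1) the paper appeals to Theorem 5, the bijection $\Min(R)\to\Min(B)$, in effect writing $\Nil(B)=\bigcap_{Q\in\Min(B)}Q=\bigcap_{\pp\in\Min(R)}\pp B$ and then invoking [OR, 1.2] to pull the intersection inside; you instead use only the weaker fact that $\pp B$ is prime for \emph{every} prime $\pp$ of $R$, together with the equivalence $f\in IB\iff c(f)\subseteq I$, applied one nilpotent element at a time. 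This is more elementary (no minimal-prime correspondence is needed) and it makes explicit exactly the point that the paper's terse citation glosses over: extension does not commute with infinite intersections in general, and the element-wise content characterization is what substitutes for that interchange. For (3) the paper argues directly with Rush's weak content formula $c(f_1\cdots f_n)\subseteq c(f_1)\cdots c(f_n)$ plus McCoy's property, whereas you deduce it formally from (2) via $Z(B)^n\subseteq (Z(R)B)^n=Z(R)^nB=(0)$; this trades subadditivity of the content function for pure ideal arithmetic. One small point to patch there: $(IB)^n=I^nB$ is a statement about ideals $I$, and $Z(R)$ is a priori only a set; but under the hypothesis $Z(R)^n=(0)$ every zero-divisor is nilpotent, so $Z(R)=\Nil(R)$ is an ideal (alternatively, expand a product of $n$ elements of $Z(R)B$ and note that every term contains a product of $n$ elements of $Z(R)$). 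With that one-line remark your argument is complete.
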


\begin{proof}
 It is well-known that the set of all nilpotent elements of a ring is equal to the intersection of all its minimal primes. For proving (1), use Theorem 5 and [OR, 1.2, p. 51]. The statements (2) is obvious by definition of content modules and [OR, 6.1]. For (3), suppose that $Z(R)^n=0$. Choose $f_1,f_2,\ldots,f_n \in Z(B)$. Therefore by McCoy's property for content algebras, $c(f_1),c(f_2),\ldots,c(f_n) \subseteq Z(R)$. But by [R, Proposition 1.1] $c(f_1f_2 \cdots f_n)\subseteq c(f_1)c(f_2)\cdots c(f_n) \subseteq Z(R)^n=(0)$. Hence $f_1f_2 \cdots f_n=0$.
\end{proof}

\begin{lemma}
 Let $R$ be a ring and $B$ a content $R$-algebra. Then $\diam (\Gamma(R)) \leq \diam (\Gamma(B))$.
\end{lemma}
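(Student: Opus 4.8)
The plan is to exploit two facts: first, that $\Gamma(R)$ sits inside $\Gamma(B)$ as the induced subgraph on the vertex set $Z(R)^{*}\subseteq Z(B)^{*}$ (part (2) of the preceding lemma, together with the injectivity of $R\to B$ coming from faithful flatness, so that for $a,b\in R$ the relation $ab=0$ holds in $R$ iff it holds in $B$); and second, that by [AL, Theorem 2.3] both diameters lie in $\{0,1,2,3\}$. Since passing to an induced subgraph can only \emph{increase} distances, the inequality we want is not the automatic one, so I would instead argue by cases on $D:=\diam(\Gamma(B))$. The case $D=3$ is immediate, because $\diam(\Gamma(R))\le 3=D$ always holds. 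It then remains to treat $D\in\{0,1,2\}$, and for these I will show directly that every pair of vertices of $\Gamma(R)$ is joined inside $\Gamma(R)$ by a path of length at most $D$.

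The tool for pushing short $B$-paths down to $R$-paths is the following projection observation, which I would isolate first. If $a\in R$ and $h\in B$ satisfy $ah=0$, then by the faithful-flatness axiom $c(ah)=a\,c(h)$, whence $a\,c(h)=c(0)=(0)$; moreover $c(h)\neq(0)$, since $h\in c(h)B$ would otherwise force $h=0$. Hence every nonzero $d\in c(h)$ lies in $Z(R)^{*}$ and satisfies $ad=0$. The key point is that when the two ends of a short $B$-path already lie in $R$, a single coefficient $d$ of the middle vertex can replace that vertex and stay adjacent to both ends at once.

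With this in hand I would run the cases. If $D=0$ then $|Z(B)^{*}|=1$, so $|Z(R)^{*}|\le 1$ and $\diam(\Gamma(R))=0$. If $D=1$ then $\Gamma(B)$ is complete, so any distinct $a,b\in Z(R)^{*}\subseteq Z(B)^{*}$ satisfy $ab=0$ in $B$, hence in $R$; thus $\Gamma(R)$ is complete and $\diam(\Gamma(R))\le 1$. For $D=2$, take distinct $a,b\in Z(R)^{*}$, so $d_{B}(a,b)\le 2$. If $d_{B}(a,b)=1$ then $ab=0$ in $B$, hence in $R$, and $d_{R}(a,b)=1$. If $d_{B}(a,b)=2$, choose an intermediate vertex $h\in Z(B)^{*}$ with $ah=0=bh$ and a nonzero $d\in c(h)$; by the projection observation $ad=0=bd$ with $d\in Z(R)^{*}$. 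Should $d=a$ or $d=b$, one relation degenerates to $ab=0$, giving $d_{R}(a,b)=1$; otherwise $a-d-b$ is a path in $\Gamma(R)$ and $d_{R}(a,b)\le 2$. Either way $d_{R}(a,b)\le 2=D$, and combining the cases yields $\diam(\Gamma(R))\le\diam(\Gamma(B))$.

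The main obstacle is exactly the middle case $D=2$. One cannot hope to project a $B$-path of length $3$ to an $R$-path of length $3$, because an interior edge $h_{1}h_{2}=0$ only yields $c(h_{1})^{n}c(h_{2})=(0)$ rather than $c(h_{1})c(h_{2})=(0)$, so chosen coefficients of consecutive interior vertices need not annihilate each other. The argument must therefore avoid ever projecting an edge joining two non-$R$ vertices; a length-$2$ path contains no such edge, which is what makes the projection clean there, while the genuinely long case $D=3$ is absorbed into the universal bound $\diam(\Gamma(R))\le 3$ instead of being handled by projection.
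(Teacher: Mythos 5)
Your proof is correct and rests on the same key step as the paper's: a common neighbor $h\in Z(B)^*$ of two vertices $a,b\in Z(R)^*$ is replaced by a nonzero coefficient $d\in c(h)$, which annihilates both ends because $a\,c(h)=c(ah)=(0)$ and $b\,c(h)=c(bh)=(0)$, so that a length-2 path in $\Gamma(B)$ between $R$-vertices descends to a path (or an edge) in $\Gamma(R)$. The differences are organizational rather than mathematical: you case on $\diam(\Gamma(B))$ and apply this projection directly, whereas the paper cases on $\diam(\Gamma(R))$ and applies the identical projection in contradiction form (showing a diameter-3 pair of $\Gamma(R)$ admits no shortcut through $Z(B)^*$); your treatment also makes explicit the degenerate possibility $d\in\{a,b\}$, which in the paper's contradiction argument is absorbed implicitly since $ar=rd=0$ with $r\in\{a,d\}$ would force $ad=0$ against the assumption that the pair has distance $3$.
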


\begin{proof}
 Note that the defining homomorphism of $R$ into $B$ is injective and therefore we can suppose $R$ to be a subring of $B$ [OR, Remark 6.1(b)] and so $Z(R)^* \subseteq Z(B)^*$. It is obvious that if $\diam (\Gamma(R))=0,1, \text{~or~} 2$, then no path in $\Gamma(R)$ can have a shortcut in $\Gamma(B)$. Now let $\diam (\Gamma(R))=3$ and $a - b - c - d$ be the path in $\Gamma(R)$ with $a,b,c,d \in Z(R)^*$ without having any shortcut. Our claim is that neither is there a shortcut for this path in $Z(B)^*$. On the contrary suppose that there is an $h \in Z(B)^*$ such that $a - h - d$ is a path in $ Z(B)^*$. Then $a\cdot c(h)=d \cdot c(h)=(0)$. Since $h\neq 0$, there exists a nonzero element $r \in c(h)$ such that $ar=rd=0$ and this means that $a - r - d$ is a shortcut in $\Gamma(R)$, a contradiction. Therefore $\diam (\Gamma(B))=3$. This means that in any case the inequality $\diam (\Gamma(R)) \leq \diam (\Gamma(B))$ holds.
\end{proof}

Recall that $\diam (\Gamma(R))=0$ iff $R$ is isomorphic to either $\mathbb Z_4$ or $\mathbb Z_2[y]/(y^2)$ [AL, Example 2.1] and $\diam (\Gamma(R))=1$ iff $xy=0$ for each pair of distinct zero-divisors of $R$ and $R$ has at least two proper zero-divisors [AL, Theorem 2.8]. Also from [L, Theorem 2.6(3)], we know that $\diam (\Gamma(R))=2$ iff either (i) $R$ is reduced with exactly two minimal primes and at least three proper zero-divisors or (ii) $Z(R)$ is an ideal whose square is not (0) and each pair of distinct zero-divisors has a nonzero annihilator. These facts help us to examine the preservation of diameter of zero-divisor graph under content extensions for the cases $\diam (\Gamma(R))=0, 1$.

\begin{theorem}
Let $R$ be a ring and $B$ be a content $R$-algebra and $B\ncong R$. Then the following statements hold:

\begin{enumerate}

\item $\diam (\Gamma(R))=0$ and $\diam (\Gamma(B))=1$ iff either $R \cong \mathbb Z_4$ or $R \cong \mathbb Z_2[y]/(y^2)$.

\item $\diam (\Gamma(R))= \diam (\Gamma(B))=1$ iff $R$ is a nonreduced ring with more than one proper zero-divisor and $Z(R)^2 =0$.

\item $\diam (\Gamma(R))=1$ and $\diam (\Gamma(B))=2$ iff $R \cong \mathbb Z_2 \times \mathbb Z_2$.

\end{enumerate}

\end{theorem}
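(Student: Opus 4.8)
The plan is to reduce every case to the classifications recalled just above—$\diam\Gamma(R)=0$ iff $R\cong\mathbb Z_4$ or $\mathbb Z_2[y]/(y^2)$ [AL, Example 2.1], the fact that $\diam\Gamma(R)=1$ means precisely that $\Gamma(R)$ is complete with at least two vertices [AL, Theorem 2.8], and the description of $\diam\Gamma(R)=2$ from [L, Theorem 2.6(3)]. The two ingredients carrying information between $R$ and $B$ are the preceding lemmas: $Z(R)^n=(0)$ iff $Z(B)^n=(0)$ (so $Z(R)^2=0\Leftrightarrow Z(B)^2=0$) and $\diam\Gamma(R)\le\diam\Gamma(B)$. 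I will also use the dichotomy (a restatement of [AL, Theorem 2.8]) that a complete zero-divisor graph with at least two vertices forces either $Z(R)^2=0$ or $R\cong\mathbb Z_2\times\mathbb Z_2$, together with McCoy's property for content algebras and the fact that $B/IB$ is a content $(R/I)$-algebra.

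For (1), the forward implication is immediate, since $\diam\Gamma(R)=0$ already gives $R\cong\mathbb Z_4$ or $\mathbb Z_2[y]/(y^2)$ by [AL]. For the converse, in either case $Z(R)$ is the maximal ideal $\mathfrak m$ with $\mathfrak m^2=0$, so $Z(B)^2=0$ by the lemma and $\Gamma(B)$ is complete; it then remains to produce at least two proper zero-divisors in $B$. Here McCoy's property identifies $Z(B)$ with $\Ann_B(\mathfrak m)$, which by flatness equals $\mathfrak m B\cong(R/\mathfrak m)\otimes_R B\cong B/\mathfrak m B$; were there only one proper zero-divisor this isomorphism would force $|\mathfrak m B|=2=|B/\mathfrak m B|$, whence $|B|=4=|R|$ and $B=R$ (using $R\hookrightarrow B$), contradicting $B\ncong R$. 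Thus $\Gamma(B)$ is complete with at least two vertices, i.e. $\diam\Gamma(B)=1$.

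For (2), the reverse implication is the easy one: $Z(R)^2=0$ together with at least two proper zero-divisors makes $\Gamma(R)$ complete, so $\diam\Gamma(R)=1$, and then $Z(B)^2=0$ with $Z(R)^*\subseteq Z(B)^*$ gives $\diam\Gamma(B)=1$ as well. For the forward implication, $\diam\Gamma(R)=1$ places us in the dichotomy: either $Z(R)^2=0$—and then a nonzero zero-divisor is nilpotent, so $R$ is nonreduced with at least two proper zero-divisors, exactly as wanted—or $R\cong\mathbb Z_2\times\mathbb Z_2$, which is excluded because it forces $\diam\Gamma(B)=2$ by part (3).

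For (3), the reverse implication is the substantive one. Writing $e_1=(1,0),e_2=(0,1)\in R\subseteq B$, the decomposition $B\cong e_1B\times e_2B$ presents $B$ as a product of two rings, each $e_iB\cong B/\mathfrak p_iB$ being a content algebra over the field $R/\mathfrak p_i\cong\mathbb Z_2$; by McCoy's property a content algebra over a field is a domain, so $B\cong D_1\times D_2$ with $D_i$ domains. Hence $B$ is reduced with exactly two minimal primes (consistent with the correspondence $\Min(R)\to\Min(B)$ of Section 3), and since $B\ncong R$ at least one $D_i\neq\mathbb Z_2$, so $|D_1|+|D_2|-2\ge 3$ proper zero-divisors; [L, Theorem 2.6(3)(i)] then yields $\diam\Gamma(B)=2$. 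Conversely, if $\diam\Gamma(R)=1$ and $\diam\Gamma(B)=2$, the dichotomy splits into $Z(R)^2=0$, which would force $\diam\Gamma(B)=1$ by the lemma (a contradiction), or $R\cong\mathbb Z_2\times\mathbb Z_2$, the desired conclusion. The main obstacle will be the two places where $B\ncong R$ must be converted into a genuine lower bound on the number of proper zero-divisors of $B$—the counting argument in (1) and the product-of-domains decomposition in (3)—while everything else follows by feeding $Z(R)^2=0\Leftrightarrow Z(B)^2=0$ into the recalled classifications.
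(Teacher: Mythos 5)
Your proof is correct, and its overall skeleton is the same as the paper's: both arguments reduce everything to the classifications in [AL, Example 2.1], [AL, Theorem 2.8] and [L, Theorem 2.6(3)], transfer information between $R$ and $B$ via Lemma 23 ($Z(R)^n=(0)$ iff $Z(B)^n=(0)$), and settle the forward implications of (2) and (3) by exactly the same dichotomy and cross-referencing between the two parts. Where you genuinely diverge is in the two places where $B\ncong R$ must produce extra vertices of $\Gamma(B)$. In (1), the paper invokes its Theorem 22(2) to get $Z(B)=Z(R)B$ and then simply asserts that $B$ has at least two proper zero-divisors; you actually prove this, using flatness to identify $Z(B)=\Ann_B(\mathfrak m)=\mathfrak m B\cong B/\mathfrak m B$ and concluding that a single-vertex $\Gamma(B)$ would force $|B|=4$ and hence $B\cong R$ --- a clean way of filling in a detail the paper leaves to the reader. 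In (3), the paper obtains ``$B$ reduced with exactly two minimal primes'' from Lemma 23(1) together with the minimal-prime correspondence of Corollary 6, and rules out ``exactly two proper zero-divisors'' by a contradiction via [AL, Theorem 2.8]; you instead split $B\cong D_1\times D_2$ along the idempotents of $R\cong\mathbb Z_2\times\mathbb Z_2$, note that each factor is a content algebra over the field $\mathbb Z_2$ and hence a domain by McCoy's property, and then count the zero-divisors of a product of domains directly. Your route is more self-contained (it bypasses Corollary 6 and Theorem 22 entirely, needing only McCoy's property, flatness, and the quotient fact that $B/IB$ is a content $(R/I)$-algebra), whereas the paper's argument is shorter given the machinery already built in Sections 3 and 4; both approaches land on [L, Theorem 2.6(3)(i)] to conclude $\diam(\Gamma(B))=2$.
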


\begin{proof}

Let $B$ be a content $R$-algebra such that $B \ncong R$. For (1), we just need to prove that if $\diam (\Gamma(R))=0$, then $\diam (\Gamma(B))=1$. It is obvious that $R$ is isomorphic to either $\mathbb Z_4$ or $\mathbb Z_2[y]/(y^2)$ [AL, Example 2.1]. But $Z(\mathbb Z_4) = (2)$ and $Z(\mathbb Z_2[y]/(y^2))= \{by+(y^2): b \in \mathbb Z_2 \}=(y)$, so in any case $Z(B)= Z(R)B$ by Theorem 22(2). It is easy to check that $fg=0$ for any distinct pair of zero-divisors $f,g$ in $B$ and $B$ has at least two proper zero-divisors. So according to [L, Theorem 2.6(2)], $\diam (\Gamma(B))=1$.

(2) and (3): If $R$ is a nonreduced ring with more than one proper zero-divisor and $Z(R)^2=(0)$ then $R\ncong \mathbb Z_2 \times \mathbb Z_2$ and $ab=0$ for all $a,b\in Z(R)$ [AL, Theorem 2.8]. This means that $\diam (\Gamma(R))=1$. Let $f,g \in Z(B)^*$. According to McCoy's property for content algebras, there exist nonzero $r,s \in R$ such that $c(f)\cdot r=c(g)\cdot s=0$. This implies that $c(f) \subseteq Z(R)$ and $c(g) \subseteq Z(R)$ and therefore $c(fg) \subseteq c(f)c(g) = (0)$. But $B$ has at least two proper zero-divisors, since $Z(R)^* \subseteq Z(B)^*$. Hence $\diam (\Gamma(B))=1$.

Now let $R\cong \mathbb Z_2 \times \mathbb Z_2$. Then $R$ is a reduced ring with exactly two minimal prime ideals, $\textbf{p}$ and $\textbf{q}$, where $\textbf{p}= ((1,0))$ and $\textbf{q}=((0,1))$ and according to Lemma 23 and Corollary 6, $B$ is a reduced ring with exactly two minimal prime ideals, $\textbf{p}B$ and $\textbf{q}B$. It is obvious that $B$ has at least two proper zero-divisors. If $B$ has exactly two proper zero-divisors, then $Z(B)^*=\{(1,0),(0,1)\} $ and therefore according to [AL, Theorem 2.8], $B\cong \mathbb Z_2 \times \mathbb Z_2$. Therefore $B$ has at least three proper zero-divisors and according to [L, Theorem 2.6(3)], we have $\diam (\Gamma(B))=2$. By this discussion, it is, then, obvious that $\diam (\Gamma(R))= \diam (\Gamma(B))=1$ implies $R$ is a nonreduced ring with more than one proper zero-divisor and $Z(R)^2 =0$.

Now let $\diam (\Gamma(R))=1$ and $\diam (\Gamma(B))=2$. If $Z(R)^2=(0)$, then $Z(B)^2=0$ and $\diam (\Gamma(B))=1$, therefore $R \cong \mathbb Z_2 \times \mathbb Z_2$ by [AL, Theorem 2.8].
\end{proof}

Now we examine the preservation of diameter of zero-divisor graph under content extensions with $\diam (\Gamma(R))=2$.

\begin{theorem}
 Let $B$ be a content $R$-algebra such that the content function $c: B \longrightarrow \FId(R)$ is onto. Let $R$ has at least three proper zero-divisors and  $B\ncong R$. Then the following statements hold:

\begin{enumerate}

 \item $\diam (\Gamma(R))= \diam (\Gamma(B))=2$ iff either (i) $R$ is a reduced ring with exactly two minimal prime ideals and $R$ has more than two proper zero-divisors, or (ii) $R$ is a primal ring with $Z(R)^2 \neq (0)$ and $R$ has Property (A).

\item $\diam (\Gamma(R))=2$ and $\diam (\Gamma(B))=3$ iff $Z(R)$ is an ideal of $R$ and $R$ does not have Property (A) but each pair of proper zero-divisors of $R$ has a nonzero annihilator.

\end{enumerate}

\end{theorem}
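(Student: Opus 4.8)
The plan is to run everything through the structure theorem [L, Theorem 2.6(3)], which says that $\diam(\Gamma(R))=2$ holds exactly when either (i) $R$ is reduced with precisely two minimal primes and at least three proper zero-divisors, or (ii) $Z(R)$ is an ideal with $Z(R)^2\neq(0)$ in which each pair of distinct zero-divisors has a nonzero annihilator; the same dichotomy applies verbatim to $B$. Since $R$ has at least three proper zero-divisors and $Z(R)^*\subseteq Z(B)^*$, the graph $\Gamma(B)$ has at least three vertices, so only these two shapes can occur. The whole argument then reduces to transporting each shape across the extension using three earlier facts: $\Nil(B)=\Nil(R)B$ (Lemma 23) together with faithful flatness, so that $R$ is reduced iff $B$ is, and $Z(R)^2=(0)$ iff $Z(B)^2=(0)$; Corollary 6, so that $R$ has exactly two minimal primes iff $B$ does; and Theorem 22, whose part (3) --- valid precisely because $c$ is onto --- states that $B$ is primal iff $R$ is primal and has Property (A).

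For the backward direction of (1) I would verify both alternatives directly. If (i) holds, then $B$ is reduced, has exactly two minimal primes by Corollary 6, and has at least three proper zero-divisors, so [L, Theorem 2.6(3)(i)] gives $\diam(\Gamma(B))=2$, and the same criterion gives $\diam(\Gamma(R))=2$. If (ii) holds, then Theorem 22(2) shows $B$ is primal, has Property (A), and $Z(B)=Z(R)B$; hence $Z(B)$ is an ideal with $Z(B)^2\neq(0)$ whose pairs of zero-divisors have nonzero annihilators, and [L, Theorem 2.6(3)(ii)] yields $\diam(\Gamma(B))=2$, while Property (A) of $R$ gives $\diam(\Gamma(R))=2$ the same way. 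For the forward direction, $\diam(\Gamma(R))=2$ puts $R$ into case (i) or (ii); case (i) is already alternative (i). In case (ii) the only missing ingredient is Property (A): here $Z(R)$ is an ideal with $Z(R)^2\neq(0)$, so $R$ is non-reduced (a reduced ring whose zero-divisors form an ideal is a domain, forcing $Z(R)^2=(0)$), hence $B$ is non-reduced; then $\diam(\Gamma(B))=2$ together with [L, Theorem 2.6(3)] forces $B$ into its case (ii), i.e. $B$ is primal, and Theorem 22(3) delivers Property (A) for $R$.

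Part (2) follows the same template. For the backward implication I would first observe that the stated hypotheses force $Z(R)^2\neq(0)$: if $Z(R)^2=(0)$, any nonzero zero-divisor annihilates the ideal $Z(R)$, so every finitely generated subideal of $Z(R)$ would have a nonzero annihilator and $R$ would have Property (A), contrary to hypothesis. With $Z(R)^2\neq(0)$, [L, Theorem 2.6(3)(ii)] gives $\diam(\Gamma(R))=2$; moreover $R$, and hence $B$, is non-reduced, so were $\diam(\Gamma(B))=2$ the case-(ii) analysis above would make $B$ primal and then hand us Property (A) for $R$ via Theorem 22(3), a contradiction; thus $\diam(\Gamma(B))=3$ by the bounds $2=\diam(\Gamma(R))\le\diam(\Gamma(B))\le 3$ coming from Lemma 24 and [AL, Theorem 2.3]. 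The forward implication is the contrapositive packaging of part (1): $\diam(\Gamma(R))=2$ puts $R$ in case (i) or (ii), but case (i) forces $\diam(\Gamma(B))=2\neq 3$, so (ii) holds, and if $R$ had Property (A) then part (1)(ii) would again give $\diam(\Gamma(B))=2$, so $R$ must lack Property (A).

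The main obstacle is the single step common to both parts: extracting Property (A) of $R$ from a diameter hypothesis on $B$. Everything else is bookkeeping with the diameter dichotomy and with the transport of \emph{reduced} and \emph{exactly two minimal primes} across the extension. The genuinely load-bearing move is the chain $\diam(\Gamma(B))=2 \Rightarrow B$ non-reduced $\Rightarrow B$ primal $\Rightarrow R$ has Property (A), whose last link is exactly where the surjectivity of the content function is indispensable; without it Theorem 22 yields only that $R$ is primal, not that it has Property (A).
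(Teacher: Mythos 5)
Your overall strategy is the same as the paper's---run both parts through Lucas's dichotomy [L, Theorem 2.6(3)] and transport the two shapes across the extension via Lemma 23, Corollary 6 and Theorem 22---and your backward direction of (1), as well as your observation in (2) that failure of Property (A) forces $Z(R)^2\neq(0)$ (a point the paper hides behind ``it is obvious that $\diam(\Gamma(R))=2$''), are correct. But the step you yourself flag as load-bearing is wrong. You claim that a reduced ring whose zero-divisors form an ideal is a domain, hence that Lucas's case (ii) forces $R$ (and then $B$) to be non-reduced. This is false. Take $R=k[x_1,x_2,x_3,\dots]/(x_ix_j : i\neq j)$. This ring is reduced (an element $c+\sum_i f_i(x_i)$, with each $f_i$ constant-free, squares to zero only if it is zero), it is not a domain, and $Z(R)=(x_1,x_2,\dots)$ is an ideal: any element with zero constant term involves only finitely many variables and is killed by any unused variable, while any element with nonzero constant term is regular. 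Moreover $Z(R)^2\neq(0)$ (e.g.\ $x_1^2\neq0$) and every pair of zero-divisors has a common nonzero annihilator (a variable occurring in neither). So this $R$ sits exactly in Lucas's case (ii) while being reduced; your parenthetical lemma only holds when there are finitely many minimal primes, and here $\Min(R)$ is the infinite family $(x_j : j\neq i)$, which nothing in the theorem's hypotheses excludes.

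This breaks the chain ``case (ii) $\Rightarrow R$ non-reduced $\Rightarrow B$ non-reduced $\Rightarrow B$ in case (ii) $\Rightarrow B$ primal'' at its first link, and that chain is invoked at both places that matter: the forward direction of (1) and the backward direction of (2). The repair is the route the paper actually takes: to rule out case (i) for $B$, pull it \emph{down} rather than pushing non-reducedness \emph{up}. If $B$ were in case (i), then $B$ is reduced with exactly two minimal primes, so $R$ (a subring of $B$) is reduced, has exactly two minimal primes by Corollary 6, and has at least three proper zero-divisors by hypothesis---that is, $R$ itself is in case (i). Since the two cases are mutually exclusive (the union of two incomparable primes $\mathfrak{p}_1\cup\mathfrak{p}_2$ is never an ideal, by prime avoidance), this contradicts $Z(R)$ being an ideal, which is what you have in hand in both (1)-forward and (2)-backward. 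With that single substitution the rest of your argument, including the appeal to Theorem 22(3) to extract Property (A), goes through as written.
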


\begin{proof}
 (1): If $R$ is a reduced ring with exactly two minimal prime ideals and $R$ has more than two proper zero-divisors, then according to Lemma 23 and Corollary 6, $B$ is a reduced ring with exactly two minimal prime ideals and obviously $B$ has more than two proper zero-divisors and therefore according to [L, Theorem 2.6(3)], $\diam (\Gamma(R))= \diam (\Gamma(B))=2$. If $R$ is a primal ring with $Z(R)^2 \neq (0)$ and $R$ has Property (A), then by Theorem 22(2), $B$ is primal and has Property (A). Also obviously $Z(B)^2\neq 0$. So according to [L, Theorem 2.6(3)], $\diam (\Gamma(R))= \diam (\Gamma(B))=2$. Now let $\diam (\Gamma(R))= \diam (\Gamma(B))=2$. If $R$ is a reduced ring with exactly two minimal prime ideals and $R$ has more than two proper zero-divisors, then we are done, otherwise, $Z(B)$ is an ideal whose square is not (0) and each pair of distinct zero-divisors has a nonzero annihilator. Since $Z(B)$ is primal, then $Z(R)$ is an ideal of $R$ and $R$ has Property (A) by Theorem 22(3). But $Z(B)^2\neq0$ implies that $Z(R)^2\neq0$ and the proof is complete.

(2) Assume that $Z(R)$ is an ideal of $R$ and $R$ does not have Property (A) but each pair of proper zero-divisors of $R$ have a nonzero annihilator. It is obvious that $\diam (\Gamma(R))=2$. Our claim is that $\diam (\Gamma(B))=3$. On the contrary, let $\diam (\Gamma(B))=2$. According to [L, Theorem 2.6(3)] and [H, Corollary 2.4], either $\zd(R)=2$ or $\zd(R)=1$. But the content function $c: B \longrightarrow \FId(R)$ is onto and in both cases, by Theorem 18(3), $R$ has Property (A), a contradiction. Therefore $\diam(\Gamma(B))=3$. Now let $\diam (\Gamma(R))=2$ and $\diam (\Gamma(B))=3$, then according to [L, Theorem 2.6(3)] and Theorem 26(1), $Z(R)$ is an ideal of $R$ and each pair of proper zero-divisors of $R$ has a nonzero annihilator. Obviously $R$ does not have Property (A), otherwise $\diam(\Gamma(B))=2$ and the proof is complete.
\end{proof}

Note that the two recent theorems are the generalization of [L, Theorem 3.6]. Consider that in the last theorem, we assume the content function $c: B \longrightarrow \FId(R)$ to be onto. In the following we state a theorem similar to [ACS, Proposition 5], without assuming the content function $c: B \longrightarrow \FId(R)$ to be onto.

\begin{theorem}
 Let $B$ be a content $R$-algebra and $Z(R)^n=(0)$, while $Z(R)^{n-1}\neq(0)$ for some $n\geqq2$. Then the following statements hold:

\begin{enumerate}

\item If $n=2$, then $\diam(\Gamma(R))=\diam(\Gamma(B))=1$.

\item If $n>2$, then $\diam(\Gamma(R))=\diam(\Gamma(B))=2$.

\end{enumerate}

\end{theorem}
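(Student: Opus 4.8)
The plan is to make the hypothesis symmetric in $R$ and $B$ and then read off each diameter from the structure of the zero-divisor set. The starting point is that $Z(R)^n=(0)$ forces $Z(R)=\Nil(R)$: for $a\in Z(R)$ the power $a^n$ is a product of $n$ elements of $Z(R)$, hence $a^n=0$, so every zero-divisor is nilpotent, and since $\Nil(R)\subseteq Z(R)$ always holds we get $Z(R)=\Nil(R)$, an ideal of $R$; moreover $Z(R)^{n-1}\neq(0)$ shows $R$ is nonreduced. By Lemma 23(3) the exponent transfers to $B$ in both directions: $Z(B)^n=(0)$, and the contrapositive at exponent $n-1$ gives $Z(B)^{n-1}\neq(0)$; consequently $Z(B)=\Nil(B)$ is an ideal of $B$ and $B$ is nonreduced as well. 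Thus it suffices to determine the diameter from the abstract data ``$Z$ an ideal, $Z^n=(0)$, $Z^{n-1}\neq(0)$'' and to apply the result to both rings. Along the way I would also record that $Z(B)=Z(R)B$: the inclusion $Z(B)\subseteq Z(R)B$ is Lemma 23(2), and the reverse follows because any nonzero $c\in Z(R)^{n-1}$ satisfies $c\,c(f)\subseteq Z(R)^n=(0)$ for every $f$ with $c(f)\subseteq Z(R)$, whence $cf=0$.

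For part (1), $n=2$ gives $Z(R)^2=(0)$ and $Z(B)^2=(0)$, so $ab=0$ for every pair of distinct proper zero-divisors in each ring; the zero-divisor graphs are therefore complete and $\diam(\Gamma(R)),\diam(\Gamma(B))\le 1$. Since $R$ is nonreduced with $Z(R)^2=(0)$, this is exactly the situation of Theorem 25(2) whenever $R$ has more than one proper zero-divisor, giving $\diam(\Gamma(R))=\diam(\Gamma(B))=1$; the single remaining boundary case, $R\cong\mathbb Z_4$ or $R\cong\mathbb Z_2[y]/(y^2)$, is covered by Theorem 25(1), which yields $\diam(\Gamma(B))=1$. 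Rather than reprove these facts I would simply cite Theorem 25.

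For part (2), $n>2$, the key device is a single universal annihilator. Fix a nonzero $c\in Z(R)^{n-1}$; then $c\cdot Z(R)\subseteq Z(R)^n=(0)$, so $c$ kills every zero-divisor (and $c^2=0$, so $c$ is itself a proper zero-divisor). Hence any two distinct proper zero-divisors $a,b$ are joined by the path $a - c - b$ (or by an edge if one equals $c$), so $\diam(\Gamma(R))\le 2$. On the other hand $n-1\ge 2$ together with $Z(R)^{n-1}\neq(0)$ forces $Z(R)^2\neq(0)$, so some pair $a,b\in Z(R)$ has $ab\neq0$ and cannot be adjacent; thus $\diam(\Gamma(R))=2$. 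Phrased against the classification, $Z(R)$ is an ideal with $Z(R)^2\neq(0)$ and $c$ is a common nonzero annihilator for every pair, which is precisely case (ii) of [L, Theorem 2.6(3)]. Repeating the argument in $B$ with a nonzero $c'\in Z(B)^{n-1}$ gives $\diam(\Gamma(B))=2$; alternatively, Lemma 24 already supplies $\diam(\Gamma(B))\ge\diam(\Gamma(R))=2$, so only the upper bound $\diam(\Gamma(B))\le 2$ from $c'$ is needed.

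The only real obstacle is bookkeeping rather than substance. Everything hinges on Lemma 23(3) transporting both $Z(B)^n=(0)$ and $Z(B)^{n-1}\neq(0)$ to $B$, and on $B$ having enough proper zero-divisors (at least two for (1), at least three for (2)) so that the degenerate low-diameter characterizations are not triggered; this is automatic once $Z(B)=Z(R)B$ is recognized as a nonzero ideal of the (strictly larger) content algebra $B$, and in the lone place where it can fail for $R$ itself — the two rings of diameter $0$ in part (1) — the statement is subsumed by Theorem 25.
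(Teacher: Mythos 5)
Your proposal is correct and follows essentially the same route as the paper: part (1) rests on Lemma 23 and Theorem 25, and part (2) on the universal annihilator $c\in Z(R)^{n-1}\setminus\{0\}$ (which kills all of $Z(R)$ because $Z(R)^n=(0)$), with the hypotheses transferred to $B$ via Lemma 23(3). The one divergence is that you explicitly treat the boundary case $R\cong\mathbb{Z}_4$ or $R\cong\mathbb{Z}_2[y]/(y^2)$, which the paper's appeal to [AL, Theorem 2.8] and Theorem 25(2) silently excludes --- and there your version is the more accurate one, since for such $R$ one only gets $\diam(\Gamma(B))=1$ while $\diam(\Gamma(R))=0$, so the stated equality really requires the implicit assumption that $R$ has more than one proper zero-divisor.
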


\begin{proof}
 (1) holds by [AL, Theorem 2.8] and Theorem 25(2).

(2): By assumption $Z(R)^n=(0)$ and $Z(R)^2\neq(0)$. Therefore $\Gamma(R)$ is not a complete graph and so there exist distinct $a,b \in Z(R)^*$ such that $ab\neq0$. Since $Z(R)^{n-1}\neq(0)$, there exist $c_1,c_2,\ldots,c_{n-1} \in Z(R)$ such that $c=c_1c_2 \cdots c_{n-1}\neq0$. So $c\neq a,b$ and $ca=cb=0$. Hence $\diam(\Gamma(R))=2$. On the other hand $Z(R)^{n-1}\neq(0)$ causes $Z(B)^{n-1}\neq(0)$. By Lemma 23, $Z(B)^n=(0)$. This means that $\diam(\Gamma(B))=2$ and the proof is complete.
\end{proof}

\section{Acknowledgment} The author wishes to thank Prof. Winfried Bruns and the referee for their useful advice.

\end{document}